\documentclass[11pt,a4paper, DIV=12]{scrartcl}
\usepackage[utf8]{inputenc}
\usepackage[T1]{fontenc}
\usepackage{amsmath}
\usepackage{amssymb}
\usepackage{amscd}
\usepackage{amsthm}
\usepackage{mathrsfs}
\usepackage{proof}
\usepackage{color}
\usepackage{enumerate}
\usepackage{hyperref}
\setkomafont{sectioning}{\bfseries}

\newtheorem{theorem}{Theorem}[section]
\newtheorem{coro}[theorem]{Corollary}
\newtheorem{lemma}[theorem]{Lemma}
\newtheorem{prop}[theorem]{Proposition}

\newtheorem*{theorem*}{Theorem}
\theoremstyle{definition}
\newtheorem{definition}[theorem]{Definition}

\newtheorem*{remark}{Remark}
\newtheorem{remarks}[theorem]{Remarks}

\newcommand{\en}{{\cal E}}

\newcommand{\vol}{\mathrm{vol}}

\newcommand{\diam}{\mathrm{diam}}

\newcommand{\cC}{\mathcal{C}}
\newcommand{\cH}{\mathcal{H}}
\newcommand{\cF}{\mathcal{F}}
\newcommand{\D}{\mathrm{D}}
\newcommand{\En}{\mathscr{E}}
\newcommand{\F}{\mathscr{F}}
\newcommand{\Xp}{X_0}
\newcommand{\Ep}{\en_0}
\newcommand{\mep}{\m_0}
\newcommand{\m}{\mathfrak{m}}
\newcommand{\dm}{\mathsf{d}}
\newcommand{\fracc}[2]{{\scriptstyle{\frac{#1}{#2}}}}
\newcommand{\che}[1]{\mathsf{Ch}_*(#1)}
\newcommand{\NN}{\mathbb{N}}

\newcommand{\RR}{\mathbb{R}}

\begin{document}

\title{A new uncertainty principle at low energies}

\author{D.~Lenz\footnote{ Mathematisches Institut, Friedrich Schiller
Universit{\"a}t Jena, 07743 Jena, Germany, daniel.lenz@uni-jena.de},
P.~Stollmann\footnote{Fakult\"{a}t f\"{u}r Mathematik,  Technische
Universit\"{a}t Chemnitz, D-09107 Chemnitz, Germany,
stollman@math.tu-chemnitz.de }
 }

\maketitle

\begin{abstract}
We present a new method to prove uncertainty principles for functions of low energy in a very general set-up. This leads to results for metric measure spaces, Dirichlet spaces and graphs under rather weak assumptions.
\end{abstract}


\section*{Introduction}

We consider quite general energy forms and study how functions with small energies are spread out in space. By  spread out we mean here  that the norm of the function can be controlled by the norm of its restriction to subsets.  This issue is often discussed under the heading uncertainty principle. It has attracted attention for various reasons in the last decades. To give a feeling of what such uncertainty estimates may look like and how they can be used, we first mention \textbf{uncertainty principles for spectral projectors}, taking the form
\begin{equation} \label{eq:intro1}
P_E V P_E \geq \kappa P_E ,
\end{equation}
where, in a Hilbert space $\mathcal{H}$, $P_E=\boldsymbol{1}_{[0,E]}(H)$ is the spectral projector associated 
to some selfadjoint operator $H\ge 0$ onto energies below $E \geq \min \sigma (H)$ and $V\ge 0$ is a bounded selfadjoint operator. More specifically, the reader should think of $\mathcal{H}=L^2(X)$, where $X$ is a discrete or continuum configuration space, $H=-\Delta$ a Laplace operator and $V=1_Y$ the indicator function of a set that is sufficiently spread out in $X$ (we have not yet properly introduced the previous notions, see below for details). Note that \eqref{eq:intro1} now takes the form 
\begin{equation}
 \label{eq:intro2}
\| 1_Y f\|^2 \geq \kappa \| f\|^2\mbox{   if  }f\in Ran(P_E) .
\end{equation}
This states that the norm of $f$ on all of $X$ is controlled by the norm of $f$ on some subset $Y$. Of course, the fundamental question  is whether such an uncertainty inequality holds for given $H$, some energy cut-off $E$ and non-trivial subsets $Y\subset X$.
\begin{itemize}
\item Our original motivation to study the topic came from the field of random Schrödinger operators, where uncertainty estimates like \eqref{eq:intro2} above help to prove Wegner estimates in particular for models in which the random perturbation does not cover the whole configuration space, see \cite{BLS,CHK-03,CHK-07,K-13,MRM-22,NTTV-18,RMV-13,stolzman} and the literature cited there. 
\item A second field of application is control theory. Here, uncertainty relations are used as an input to the Lebeau-Robbiano strategy \cite{LebeauR-95}. The general picture is that an uncertainty relation implies an observability estimate, which by Douglas' lemma \cite{Douglas-66}, is equivalent to null-controllability of a certain linear control problem. We mention  \cite{LebeauZ-98,JerisonL-99,Miller-10,TenenbaumT-11,WangZ-17,BeauchardP-18,NakicTTV-20,GallaunST-20} for the subsequent development of the method. While the original Lebeau-Robbiano strategy requires uncertainty estimates ``at all energies'' it has recently been realized that uncertainty estimates at low energies imply a somewhat weaker form of controllability, see \cite{BST,EgidiGST-24,MuenchSST}.
\item In the rapidly developping field of signal processing on graphs, the set-up would typically be a finite weighted graph $X$, $H$ a suitable discrete Laplacian and $Y$ a subset of nodes. A function $f\in Ran(P_E)$ would be called a band-limited graph signal and inequality \eqref{eq:intro2} readily implies that such a function is uniquely determined its values on $Y$. For the corresponding theory of sampling we refer to the fundamental paper \cite{pesenson-2} by I.Z. Pesenson, which is frequently cited in the graph signal literature, see, e.g. \cite{ago,al,cvsk,ptgv,snfov,tbl}.
\end{itemize}

We present a new and rather simple approach to prove a more general version of such uncertainty estimates.  Our method, as captured  in Theorem \ref{theorem:main-technical} gives the following \textbf{uncertainty principle at low energy}, where we suppress certain explicit constants to underline the analogy to \eqref{eq:intro2} above; for the notions used, the reader has to look at the subsequent section:
\medskip

\noindent\textbf{Theorem 1.5 - slightly rephrased.}
\textit{Let $\mathcal{E}$ be an energy functional on  $L^2(X,\m)$. Assume that there exists a  $(\lambda,M)$-decomposition $\cC$,  and let $\varrho>0$.  Then, there exist $E,\kappa>0$ so that  for any  measurable subset $Y$  of $X$ that is $\varrho$-thick with respect to $\cC$ and any $f\in L^2(X,\m)$:}
\begin{equation}\label{UP}\tag{UP}
\mathcal{E} (f) \leq E \|f\|^2
\quad\Longrightarrow\quad
\|f\|^2 \leq \frac{1}{\kappa} \|f 1_Y\|^2.
\end{equation}

First observe that if the energy functional is the quadratic form of a selfadjoint operator $H$, then \eqref{UP} implies \eqref{eq:intro2} with the same constants. As shown in \cite{BST}, these two versions of an uncertainty estimate are actually equivalent. However, for the general set-up we consider here, the energy form is not required to be quadratic.

We note that our result is rather uniform in $Y$ in that the constants only depend on $Y$ through the  parameter $\varrho$, which can be seen as its density or thickness.  We also note that this dependence on $\varrho$ is unavoidable as the case of a compact space $X$ and $f = 1$ shows. 

Our approach can be seen as a ``local to global'' strategy: it turns local Poincaré inequalities to a quantitative uncertainty principle of the form \eqref{UP}. At the heart of the matter is the rather elementary observation that a global energy estimate implies quite uniform local estimates through  Naißigkait's lemma, \ref{nice}, below. This general, abstract part is given in the next section.

Section 2 is devoted to metric measure spaces. Here, we consider the Cheeger energy as energy functional. The main work to be done is to identify the necessary conditions that allow one to construct what is called a $(\lambda,M)$-decomposition of the underlying space $X$. It should be thought of as breaking up $X$ into pieces of finite measure with two important properties: the covering is locally finite, with at most $M$ of the pieces overlapping. On each of the pieces a Poincaré inequality with constant $\lambda$ holds. We will achieve that goal in terms of a family of open balls with the same radius $R$ provided the underlying metric measure space satisfies a weak volume comparison property, substantially weaker than the volume doubling condition which is used quite often in the metric measure space literature. We also have to assume a uniform Poincaré inequality on balls of radius $R$. We should point out, that this set-up contains cases for which the energy is not quadratic, hence beyond the techniques available in the literature so far.

In Section 3 we show how to treat quasi-regular Dirichlet forms with the abstract method. This leads to conditions pretty much like the ones in the metric measure space setting.

Section 4 deals with weighted graphs, a case well-studied in different generality, we already mentioned \cite{pesenson-2} above. We discuss in detail the resulting estimates and compare them to inequalities obtained earlier, with different methods.

\section{The abstract core}\label{section:technical-core}
In this section we present a general result capturing our approach.  We proceed in two steps. In the first step  we  consider the case of a finite measure space. Building on this we then treat the general case by suitable decompositions.  The considerations in this section neither assume that the functional in question is a quadratic form nor that the underlying space carries  a topology. 

\medskip

Throughout we denote the characteristic function of a set $K$ by $1_K$. 

An \textit{energy functional} $\mathcal{E}$ on the Hilbert space $\cH$ is a map  $\mathcal{E} : \mathcal{F} \longrightarrow [0,\infty)$, where $\mathcal{F}$ is a subspace of $\cH$ referred to as  the \textit{domain} of $\mathcal{E}$. Whenever convenient, we extend the functional to all of $\cH$ by setting it equal to $\infty$ on $\cH\setminus \cF$.

Of course, our notation is inspired by Dirichlet form theory but for our purposes here, we do not require the functional to be quadratic, let alone closed or densely defined. For general facts on closed forms and the associated operators, see \cite{kato,reed-simon}.

Let $(\Xp,\mep)$ be a finite measure space. The energy functional $\Ep$ on $L^2 (\Xp,\mep)$  is said to satisfy a \textit{Poincar\'{e} inequality} (with parameter $\lambda >0$)  if any $f$ in the domain of $\Ep$ satisfies the inequality 
\begin{equation}\label{PI}\tag{PI}
 \lambda \|f -  A(f) \underline{1}\|^2 \leq \Ep(f).
\end{equation}
Here, the  average $A  (f) $ of $f$ given by $$A (f) := \frac{1}{\mep(\Xp)} \int_{\Xp} f d\mep
$$
and $\underline{1}$ denotes the constant function with value $1$. 
Note that  $\Ep\geq 0$ must hold if $\Ep$ satisfies a Poincar\'{e} inequality.

\begin{remark}[The Poincar\'{e} inequality as a spectral bound.] If $\Ep$ is a quadratic form with $\Ep (1) =0$ then a Poincar\'{e} inequality with parameter $\lambda$ is valid  if and only if 
$$\lambda \leq \lambda_1 :=\inf\{\frac{\Ep (g)}{\|g\|} : g\neq 0, \langle g,1\rangle = 1\}$$
holds (as in this case  $\Ep (f) = \Ep (f - A(f)\underline{1})$ holds). So, if $\Ep$ is a closed quadratic form associated with a selfadjoint operator $H_0$ and $0$ is an eigenvalue of $H_0$ with eigenspace given by the constant functions, then \eqref{PI} means that  the spectrum   of $H_0$ with $0$ removed is contained in $[\lambda,\infty)$. 
\end{remark}

By definition, the validity of a Poincar\'{e} inequality implies that functions with small values of $\Ep$  are essentially constant (and equal to its average). This allows one to show that a function with small value of $\Ep$  is governed by its values on a subset. Specifically, the following statement holds.

\begin{prop}[How small energy functions are governed  by subsets: finite measure case]  Let $(\Xp,\mep)$  be a finite measure space and  let $\Ep\geq 0$ be a closed form satisfying a Poincar\'{e} inequality with parameter $\lambda$. Let $Y$ be a measurable subset of $\Xp$  with positive measure. Let  $0< \varrho \leq \frac{\mep(Y)}{\mep(\Xp)}$ be given.  Then, for any $f\in L^2(\Xp,\mep)$  
$$\Ep(f) \leq \frac{\lambda \varrho}{4}  \|f\|^2
\quad\Longrightarrow \quad
\|f\|^2 \leq \frac{5}{\varrho} \|f 1_Y\|^2.$$
\end{prop}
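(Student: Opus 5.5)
The plan is to split $f$ into its mean and a mean-zero part, and to estimate $\|f1_Y\|$ from below using only the mean of the mean-zero part over $Y$, not its full norm. Applying Cauchy–Schwarz directly to $g1_Y$ only gives a constant of about $7.5/\varrho$; the sharper estimate below reaches $5/\varrho$ (in fact $4/\varrho$).

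\emph{Setup.} If $f$ is not in the domain of $\Ep$, then $\Ep(f)=\infty$ and there is nothing to prove; the case $f=0$ is also trivial. So let $f\neq 0$ be in the domain, and write $F:=\|f\|^2$, $c:=A(f)$, $g:=f-c\underline{1}$, $q:=\mep(Y)/\mep(\Xp)\ge\varrho$. Since $\langle g,\underline 1\rangle=0$, Pythagoras gives
$$F=\|g\|^2+|c|^2\mep(\Xp).$$
The Poincar\'e inequality together with the hypothesis on $\Ep(f)$ gives
$$\lambda\|g\|^2\le \Ep(f)\le \frac{\lambda\varrho}{4}F .$$
Set $s:=\|g\|^2/F$. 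Then $s\le \varrho/4$ and $|c|^2\mep(\Xp)=(1-s)F$. Closedness of $\Ep$ is not needed anywhere.

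\emph{Lower bound on $\|f1_Y\|^2$.} Put $\beta:=\int_Y g\,d\mep$. I will use two estimates.
\begin{itemize}
\item Cauchy–Schwarz on $Y$ gives $\|g1_Y\|^2\ge |\beta|^2/\mep(Y)$.
\item Because $g\perp\underline 1$, we have $\beta=\langle g,1_Y-q\underline 1\rangle$. Hence
$$|\beta|\le\|g\|\,\|1_Y-q\underline1\|=\|g\|\sqrt{\mep(\Xp)q(1-q)} .$$
This step is the key improvement: it uses the mean-zero property of $g$ to gain the factor $\sqrt{1-q}$.
\end{itemize}
Expanding the square and inserting the first estimate,
$$\|f1_Y\|^2=|c|^2\mep(Y)+2\,\mathrm{Re}(\bar c\beta)+\|g1_Y\|^2\ge\Bigl(|c|\sqrt{\mep(Y)}-\frac{|\beta|}{\sqrt{\mep(Y)}}\Bigr)^2 .$$
Substituting the second estimate and the identities from the setup, I expect to obtain
$$\|f1_Y\|^2\ \ge\ F\Bigl(\sqrt{q(1-s)}-\sqrt{(1-q)s}\Bigr)^2 .$$
This requires the bracket to be nonnegative, which holds because $s\le q/4$ forces $\sqrt{(1-q)s}\le\frac12\sqrt q\le\sqrt{q(1-s)}$.

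\emph{Elementary optimisation.} The expression $h(q,s):=\sqrt{q(1-s)}-\sqrt{(1-q)s}$ is increasing in $q$ and decreasing in $s$. Using $q\ge\varrho$ and $s\le\varrho/4$, it therefore suffices to show
$$\Bigl(\sqrt{1-\varrho/4}-\tfrac12\sqrt{1-\varrho}\Bigr)^2\ge\tfrac14\quad\text{for }\varrho\in(0,1] .$$
This reduces to $\sqrt{1-\varrho/4}\ge\frac12\bigl(1+\sqrt{1-\varrho}\bigr)$. Squaring, it becomes $1-\varrho/4\ge \frac14\bigl(2-\varrho+2\sqrt{1-\varrho}\bigr)$, which is equivalent to $1\ge\sqrt{1-\varrho}$ and is obvious. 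This yields $\|f1_Y\|^2\ge \frac{\varrho}{4}F\ge\frac{\varrho}{5}F$, which is the claim.

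The main obstacle is the middle step. Arranging the bound so that the mean-zero property of $g$ enters through $\langle g,1_Y-q\underline 1\rangle$ is what makes constant $5$ attainable. The remaining monotonicity checks are routine.
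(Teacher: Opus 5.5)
Your proof is correct, and it departs from the paper's at exactly the step you single out.

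The paper uses the same splitting $f=c\underline{1}+\psi$ and the same consequence $\|\psi\|^2\le\frac{\varrho}{4}\|f\|^2$ of \eqref{PI}. It then bounds $\|f1_Y\|\ge\|c1_Y\|-\|\psi\|$ with $\|c1_Y\|^2\ge\frac34\varrho\|f\|^2$, using only $\|\psi1_Y\|\le\|\psi\|$. You instead expand the square and control the cross term through $\beta=\int_Y g\,d\mep=\langle g,1_Y-q\underline{1}\rangle$, which gains the factor $\sqrt{1-q}$.

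The paper's route is shorter, but its last inequality is an arithmetic slip: $\bigl(\frac{\sqrt3}{2}-\frac12\bigr)^2=1-\frac{\sqrt3}{2}\approx0.134<\frac15$. As written it therefore only gives $\|f\|^2\le\frac{4+2\sqrt3}{\varrho}\|f1_Y\|^2$, your $7.5/\varrho$. Even keeping $1-\frac{\varrho}{4}$ in place of $\frac34$, the triangle-inequality route falls short of $5/\varrho$ for $\varrho$ near $1$.

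Your refinement is what actually proves the proposition as stated, and it gives $4/\varrho$. This constant is sharp in the abstract setting. Take $\Ep(f)=\lambda\|f-A(f)\underline{1}\|^2$, $\mep(Y)=\varrho\,\mep(\Xp)$, $c>0$, and $g$ a negative multiple of $1_Y-\varrho\underline{1}$ with $\|g\|^2=\frac{\varrho}{4}\|f\|^2$. Then all your inequalities are equalities, and $\|f1_Y\|^2/(\varrho\|f\|^2)=\bigl(\sqrt{1-\varrho/4}-\frac12\sqrt{1-\varrho}\bigr)^2\to\frac14$ as $\varrho\to0$.

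Downstream, your version justifies the constant $\frac{10M}{\varrho}$ in Theorem \ref{theorem:main-technical} and even improves it to $\frac{8M}{\varrho}$. The paper's argument by itself only supports about $\frac{15M}{\varrho}$.
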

\begin{proof} Without loss of generality we can assume $\|f\|=1$. We can decompose 
$f = c \underline{1} + \psi$ with $\psi \perp \underline{1}$ and $c = A(f)$. From the Poincar\'{e} inequality, \eqref{PI}, we find 
$$\|\psi\|^2 \leq \frac{1}{4} \varrho.$$
This gives 
$$c^2 \mep(X)  =\|c \underline{1}\|^2 = \|f\|^2 - \|\psi\|^2 \geq  1 - \frac{\varrho}{4}$$
and
$$ \|c 1_Y\|^2 = c^2 \mep(Y) \geq c^2 \mep(X) \varrho \geq  \varrho \left(1- \frac{\varrho}{4}\right)\geq \varrho \frac{3}{4}.$$
Altogether we then find  for 
$f 1_Y = c 1_Y + \psi 1_Y$ the estimate
$$\|f 1_Y\|^2  \geq ( \| c 1_Y\| - \|\psi\|)^2 \geq \left( \sqrt{\varrho} \left( 
\frac{\sqrt{3}}{{2}} - \frac{1}{2}\right)\right)^2 \geq \varrho \frac{1}{5}.$$
This finishes the proof. 
\end{proof}

\begin{remark}[Optimality of the constants] We comment on the constants $\lambda$  appearing in the assumption and the constant $\varrho$ appearing in the conclusion of the proposition. Given the generality of the statement, these constants are essentially optimal. To see this we consider the operator $L=- \frac{d^2}{d^x}$ with Neumann boundary conditions on $[0,2\pi]$.  Then, $L$ has pure point spectrum with eigenvalues $n^2,n=0,1,2,\ldots $ and associated eigenfunctions $\cos(n\cdot)$.

Now, the constant functions are eigenfunctions (to the eigenvalue $0$) and satisfy the assumption of the proposition. As these functions satisfy $\|f\|^2 = \frac{1}{\varrho} \|f 1_Y\|^2$ for $\varrho = \mep(Y)/\mep(X)$ we infer that the factor $\frac{1}{\varrho}$ in the conclusion  can not be avoided. 

Also, the  eigenfunctions for eigenvalues larger or equal to $\lambda =1$ have a zero within $[0,2\pi]$. Hence, chosing $Y$ as  a sufficiently small neighborhood of such a zero we infer that such eigenfunctions will not satisfy the conclusion of the statement. This shows that the factor $\lambda$ in the assumption of the proposition is necessary.  
\end{remark}

We now want to prove an analogue for not necessarily finite measure spaces. We will achieve this by decomposing our space into finite measure subspaces, use the above reasoning on each part of the decomposition and then combine the estimates. In order to combine the estimates from the different parts of the decomposition we use the following lemma (that may also be of interest in other situations). In the sequel, we call a set countable if it is finite or countably infinite.

\begin{lemma}[Naißigkait's Lemma]\label{nice}
Let $N$ be countable. 
Let  $a_k,b_k\geq 0$, $k\in N$, be given with  $\sum_{k} b_k\leq \sum_{k} a_k< \infty$. Then, 
$$(1-c)\sum_k a_k \leq  \sum_{k : a_k \geq c b_k} a_k$$
holds for any $c\geq 0$. 
\end{lemma}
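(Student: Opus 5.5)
The approach is to split off the indices where $a_k < c\,b_k$ and bound their contribution using the hypothesis $\sum_k b_k \le \sum_k a_k$. All sums are sums of nonnegative terms and converge, since $\sum_k a_k<\infty$ and $\sum_k b_k \le \sum_k a_k$. So we may freely split and rearrange them, and the countability of $N$ causes no issues.

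Let $N_1 := \{k \in N : a_k \ge c\, b_k\}$ and let $N_2 := N\setminus N_1 = \{k : a_k < c\, b_k\}$. Then
$$\sum_{k\in N} a_k = \sum_{k\in N_1} a_k + \sum_{k\in N_2} a_k .$$
On $N_2$ we have $a_k < c\,b_k$, and $b_k \ge 0$, so
$$\sum_{k\in N_2} a_k \le c\sum_{k\in N_2} b_k \le c \sum_{k\in N} b_k \le c\sum_{k\in N} a_k .$$
For $c=0$ the set $N_2$ is empty and the claim is trivial; the estimate above covers this case anyway.

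Substituting this bound into the decomposition gives
$$\sum_k a_k \le \sum_{k\in N_1} a_k + c\sum_k a_k .$$
Since $\sum_k a_k$ is finite, we may subtract $c\sum_k a_k$ from both sides. This yields $(1-c)\sum_k a_k \le \sum_{k:\,a_k\ge c b_k} a_k$, as claimed. For $c\ge 1$ the left-hand side is nonpositive and the statement holds trivially, but the argument does not need to distinguish this case.

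The argument is elementary and has no real obstacle. The only points requiring care are that finiteness of $\sum_k a_k$ is what justifies the subtraction in the last step, and that the nonnegativity of the $b_k$ is what allows enlarging $\sum_{k\in N_2} b_k$ to $\sum_{k\in N} b_k$.
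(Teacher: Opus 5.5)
Your proof is correct and follows the paper's argument exactly: bound the sum of $a_k$ over the indices with $a_k < c\,b_k$ by $c\sum_k b_k \le c\sum_k a_k$, then subtract this from $\sum_k a_k$, which is allowed because that sum is finite. You also spell out details the paper leaves implicit, namely the nonnegativity used to enlarge the sum and the finiteness that justifies the subtraction.
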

\begin{proof} From $\sum_k b_k \leq \sum_k a_k$ we easily find 
$$\sum_{k : a_k < c b_k} a_k  \leq  \sum_{k} c b_k \leq c \sum_k a_k$$
and the desired statement follows. 
\end{proof}
\begin{remark}
 One way to view the preceding inequality is the following: Evidently
 $$
 b_k\le a_k\quad\Longrightarrow\quad \sum_k b_k \leq \sum_k a_k
 $$
 and the reverse implication is false. However, the slightly weaker (for $c\in [0,1)$) inequality $cb_k\le a_k$ holds for a substantial part of the terms, substantial measured in terms of the total sum of the corresponding terms $a_k$. We call it Naißigkait's Lemma, see \cite{PMFO}.
\end{remark}

We now turn to our notion of decomposition. For the rest of this section, let $(X,\m)$ be a measure space. 

\begin{definition}[Decomposition]\label{decompose}
Let $\mathcal{E}\geq 0$ be an energy functional on $L^2(X,\m)$.  Let $\lambda,M>0$ be given. A set $\cC$  of measurable subsets of $X$ with finite measure  is called a $(\lambda,M)$-\textit{decomposition of} $(X,\m)$ (\emph{with respect to} $\mathcal{E}$) if the following conditions hold:

\begin{itemize}
\item[(D1)] $1\leq \sum_{c\in\cC} 1_{C} \leq M$.

\item[(D2)] For any $C\in \cC$ there exists a form $\mathcal{E}_C$ on $L^2 (C,\m)$, such that for any $f$ in the domain of $\mathcal{E}$ the restriction $f|_C$ of $f$  to $C$ belongs to the domain of $\mathcal{E}_C$ and 
$$\frac{1}{M} \sum_C \mathcal{E}_C (f|_C) \leq \mathcal{E} (f).$$ 
\item[(D3)] For any $C\in \cC$, $\mathcal{E}_C$  satisfies a Poincar\'{e} inequality with parameter $\lambda$.
\end{itemize}
\end{definition}

Clearly, (D1) above means that $\cC$ is a locally finite covering of $X$ that is compatible with the form $\en$ by (D2), and the finite measure sets of this covering satisfy a uniform Poincaré inequality by (D3). Next we present our notion of relative denseness of a set, capturing the idea that it is spread out in space, in fact spread out with respect to a covering. 

\begin{definition}[Thick subsets]  Let $\cC$ be a covering of $X$ consisting  of subsets with finite measure. Let $\varrho>0$ be given. Then,  a measurable subset $Y$ of $X$ is called $\varrho$-\emph{thick with respect to} $\cC$ if
$$\m(C \cap Y) \geq \varrho \m(C)$$
holds for each $C\in \cC$. 
\end{definition}

Now, we have gathered the necessary ingredients to state and prove our main abstract result. 

\begin{theorem}[How small energy functions are governed by subsets: general case] \label{theorem:main-technical} Let $\mathcal{E}$ be an energy functional on  $L^2(X,\m)$. Assume that there exists a  $(\lambda,M)$-decomposition $\cC$,  and let $Y$ be a measurable subset of $X$ that is $\varrho$-thick with respect to $\cC$. Then, for any $f\in L^2(X,\m)$:
$$\mathcal{E} (f) \leq \frac{\lambda \varrho}{8 M} \|f\|^2
\quad\Longrightarrow\quad
\|f\|^2 \leq \frac{10 M}{\varrho} \|f 1_Y\|^2.$$
\end{theorem}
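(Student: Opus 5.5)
We combine the finite measure Proposition with Naißigkait's Lemma (Lemma \ref{nice}), applied to the pieces of the decomposition. By homogeneity it is natural to normalize, but since $\mathcal{E}$ need not be quadratic we simply keep $\|f\|$ general. If $\mathcal{E}(f)=\infty$ there is nothing to show, so let $f$ be in the domain with $\mathcal{E}(f)\le \frac{\lambda\varrho}{8M}\|f\|^2$.

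\textbf{Setting up the sequences.} The collection $\cC$ is countable. This would need a remark: if the sets carry positive measure and $X$ is $\sigma$-finite this is automatic; otherwise only countably many $C$ have $\|f1_C\|>0$, and the other indices contribute nothing. For $C\in\cC$ put
$$a_C:=\|f 1_C\|^2, \qquad b_C:=\frac{8}{\lambda\varrho}\,\mathcal{E}_C(f|_C).$$
By (D1), $\|f\|^2\le \sum_C a_C\le M\|f\|^2<\infty$. By (D2) and the energy assumption,
$$\sum_C b_C\le \frac{8M}{\lambda\varrho}\,\mathcal{E}(f)\le \|f\|^2\le \sum_C a_C.$$
Hence Lemma \ref{nice} with $c=\frac12$ gives
$$\tfrac12\sum_C a_C\le \sum_{C\in\cC_g} a_C,$$
where $\cC_g$ is the set of ``good'' pieces with $a_C\ge \frac12 b_C$.

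\textbf{Local step.} For $C\in\cC_g$ we have
$$\mathcal{E}_C(f|_C)=\frac{\lambda\varrho}{8}\,b_C\le \frac{\lambda\varrho}{4}\,\|f|_C\|^2.$$
On $(C,\m|_C)$ the form $\mathcal{E}_C$ satisfies a Poincaré inequality with parameter $\lambda$ by (D3), and $\m(C\cap Y)\ge\varrho\,\m(C)$ by thickness. The Proposition then yields
$$\|f1_C\|^2\le \frac5\varrho\,\|f1_{C\cap Y}\|^2.$$
The proof of the Proposition uses only (PI) and no closedness, so it applies to $\mathcal{E}_C$. If $\|f1_C\|=0$ the estimate is trivial. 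The one thing to check is that the Proposition's hypotheses match exactly: $\frac{\lambda\varrho}{4}$ is precisely what $c=\frac12$ together with the factor $8$ in $b_C$ produces.

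\textbf{Summing up.} Combining the pieces,
$$\|f\|^2\le \sum_C a_C\le 2\sum_{C\in\cC_g} a_C\le \frac{10}{\varrho}\sum_{C} \|f 1_{C\cap Y}\|^2\le \frac{10M}{\varrho}\,\|f1_Y\|^2,$$
where the last inequality is (D1) applied to the function $|f1_Y|^2$. This is the claim.

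The argument is short. The only delicate points are the bookkeeping of constants, chosen so that the local energy bound lands exactly in the range of the Proposition, and the countability of $\cC$ needed to invoke Lemma \ref{nice}.
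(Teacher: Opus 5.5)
Your proof is correct and is essentially the paper's argument. The paper applies Naißigkait's Lemma with $c=\frac12$ to $a_k=\frac{\lambda\varrho}{8}\|f_k\|^2$ and $b_k=\mathcal{E}_k(f_k)$ over the countable set of pieces on which $f$ does not vanish (your $a_C,b_C$ are these rescaled by $\frac{8}{\lambda\varrho}$), uses the finite-measure Proposition on the good pieces, and sums up with both bounds in (D1) as you do; your remarks that the Proposition needs only (PI) and no normalization $\|f\|=1$ make explicit points the paper leaves unstated.
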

\begin{proof} Let $f\in L^2(X,\m)$. By property (D1) of $\cC$ we know that
$$
N:=\{C\in\cC : f|_C\not=0\}
$$
is countable.  By a slight abuse of notation we write $\|\cdot\|$ for the norm on any $L^2 (C,\m)$, $C\in N$ and for ease of notation, we use  $f_k$ for the restriction of $f$ to $k\in N$.   From the assumption on $f$ and the 
 defining properties of the decomposition we find
$$\frac{1}{M} \sum_k \mathcal{E}_k (f_k) \leq \mathcal{E} (f) \leq \frac{\lambda \varrho}{8M}  \sum_k \|f_k\|^2.$$
This implies
$$\sum_k \mathcal{E}_k (f_k) \leq \sum_k \frac{\lambda \varrho}{8} \|f_k\|^2.$$

We are going to  apply  Naißigkeit's Lemma with $b_k = \mathcal{E}_k (f_k)$ and $a_k = \frac{\lambda \varrho}{8} \|f_k\|^2$ and $c = \frac{1}{2}$. 
Specifically, we set 
$$S:=\{k\in N : \frac{1}{2} \mathcal{E}_k (f_k)  \leq \frac{\lambda \varrho}{8} \|f_k\|^2\}.$$
Then, we find from the Naißigkeit's Lemma 
$$\sum_{k\in S}  \|f_k\|^2\geq \frac{1}{2} \sum_k  \|f_k\|^2.$$
Moreover,  by its very definition we have for each $k\in S$ the inequality  $\mathcal{E}_k (f_k)\leq \frac{\lambda \varrho}{4 } \|f_k\|^2$. From the above proposition on the finite measure case we the find for any $k\in S$ the estimate
$$ \|f_k\|^2 \leq  \frac{5}{\varrho} \|f_k 1_Y\|^2.$$
Putting this together we arrive at
$$
\|f 1_Y\|^2 M \geq  \sum_k \|f_k 1_Y\|^2\\
\geq  \sum_{k\in S} \|f_k 1_Y\|^2\\
\geq \sum_{k\in S} \frac{\varrho}{5} \|f_k\|^2\\
\geq  \frac{\varrho}{10} \sum_k \|f_k\|^2 \\
\geq  \frac{\varrho}{10} \|f\|^2.$$
This gives the desired statement. 
\end{proof}
We should remark that our reasoning is in parts  somewhat reminiscent of the  arguments from \cite{pesenson}, where a local to global result for Poincaré inequality is presented for Dirichlet spaces. However, the set-up in the former paper is more restricted and the results there goe in a different direction.

\section{Application to metric measure spaces}\label{section:application-metric-measure-space}
In this section we apply our abstract result to study the Cheeger energy on  metric measure spaces. Such spaces have attracted a lot of attention in the last decades, see \cite{cheeger,heinonen,ags-2005} and the literature cited there, and in many works the authors assume volume doubling and local Poincaré inequalities. We show that under these quite natural assumptions an uncertainty principle holds. 

In fact, a much weaker (than doubling) volume comparison property will allow us to provide a decomposition, the Poincaré property being assumed, again, in a very weak form.

In order to include fairly general classes of spaces, we adopt the general set-up from \cite{ags-2014}. We always assume in this section that  $(X,\dm,\m)$ is an extended metric measure space, i.e. $\dm$ is an extended metric on $X$ that carries a topology making it a Polish extended space in the sense of Ambrosio, Gigli and Savar\'{e}, \cite{ags-2014}; moreover $\m$ is assumed to be a $\sigma$-finite measure on $X$ such that metric balls of positive radius  have finite positive measure. 
   We denote the open ball around $p\in X$ with radius $R$ by $U(p,R)$.  
As is well-kown any metric measure space with volume doubling allows for a locally finite covering by balls. To this end, the following much weaker property suffices: 

\begin{definition}
$(X,\dm,\m)$ \emph{satisfies the volume comparison property} \eqref{VCR} \emph{at scale} $R$ with constant $\gamma$ if:
\begin{equation}\label{VCR}\tag{VC$[\gamma,R]$}
 \m(U(p,4R)) \leq \gamma \m(U(p,\fracc{1}{2}R))\mbox{  for all }p\in X
\end{equation}
\end{definition}

\begin{prop}[Uniform covering]\label{uniformcov} Let $(X,\dm,\m)$ be a metric measure space satisfying \eqref{VCR} with $R>0$ and constant $\gamma$. Then we can find a countable $N\subset X$ with 
\begin{equation}\label{cov}\tag{cov}
 X = \bigcup_{p\in N} U(p,R)
\end{equation}
and, for any $x\in X$, 
 \begin{equation}\label{loc-fin}\tag{LF}
 \#\{p\in N: x\in U(p,R)\}\le \gamma
 \end{equation} 
\end{prop}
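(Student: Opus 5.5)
We take $N$ to be a maximal $R$-separated subset of $X$: a set with $\dm(p,q)\ge R$ for distinct $p,q\in N$, maximal with respect to inclusion. Such a set exists by Zorn's lemma, since the union of a chain of $R$-separated sets is again $R$-separated.

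\emph{Covering.} Property \eqref{cov} follows directly from maximality. If some $x\in X$ lay in no $U(p,R)$, then $\dm(x,p)\ge R$ for every $p\in N$, and $N\cup\{x\}$ would be a strictly larger $R$-separated set.

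\emph{Local finiteness.} Fix $x\in X$ and let $F\subset\{p\in N: x\in U(p,R)\}$ be any finite subset. The balls $U(p,\fracc12 R)$, $p\in F$, are pairwise disjoint, because the centres are at distance at least $R$. Each $p\in F$ satisfies $\dm(p,x)<R$, so $U(p,\fracc12 R)\subset U(x,2R)$. Hence
$$\sum_{p\in F}\m(U(p,\fracc12 R))\le \m(U(x,2R)).$$
The step needing care is comparing each $\m(U(p,\fracc12R))$ with $\m(U(x,2R))$ using only \eqref{VCR}, which is centred at a single point. We use the inclusion $U(x,2R)\subset U(p,3R)\subset U(p,4R)$, valid since $\dm(x,p)<R$. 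Applying \eqref{VCR} at $p$ gives
$$\m(U(x,2R))\le \m(U(p,4R))\le\gamma\,\m(U(p,\fracc12R)).$$
All these measures are finite and positive by the standing assumption on balls. Summing over $F$,
$$\#F\cdot \gamma^{-1}\m(U(x,2R))\le \sum_{p\in F}\m(U(p,\fracc12R))\le\m(U(x,2R)),$$
so $\#F\le\gamma$. Since $F$ was an arbitrary finite subset, the whole set $\{p\in N: x\in U(p,R)\}$ is finite with at most $\gamma$ elements, which is \eqref{loc-fin}.

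\emph{Countability.} Because $X$ is Polish in the extended sense, it is separable in its topology. Take a countable dense set $D$. By \eqref{cov} and \eqref{loc-fin}, each point of $D$ lies in at most $\gamma$ of the balls $U(p,R)$, $p\in N$. It remains to show that every ball $U(p,R)$ contains a point of $D$; then $N$ injects, up to multiplicity at most $\gamma$, into $D$ and is countable.

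This last step is the main obstacle: it requires the balls $U(p,R)$ to be open, or at least to have nonempty interior, in the Polish topology, which may be coarser than the extended-metric topology. If this is unclear, we would fall back on $\sigma$-finiteness. Write $X=\bigcup_n X_n$ with $\m(X_n)<\infty$, and note that $\m(U(p,\fracc12R))>0$ for every $p$ and the balls $U(p,\fracc12R)$, $p\in N$, are pairwise disjoint. For each $n$ and $k$, only finitely many of these balls can satisfy $\m(U(p,\fracc12R)\cap X_n)>1/k$, since they are disjoint and $\m(X_n)<\infty$. Every $p\in N$ falls into one of these countably many classes, so $N$ is countable.
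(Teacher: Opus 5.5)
Your proof is correct and follows the paper's argument: Zorn's lemma gives a maximal family (you take an $R$-separated set of centres, the paper a maximal family of pairwise disjoint balls $U(p,\fracc12R)$, a cosmetic difference), \eqref{cov} follows from maximality, \eqref{loc-fin} follows from disjointness of the half-radius balls plus \eqref{VCR} at each centre $p$ (your $U(x,2R)$ plays the role of the paper's $U(x,\fracc32R)$), and countability follows from $\sigma$-finiteness and positivity of ball measures. The separability detour is unnecessary, and your $\sigma$-finiteness fallback is exactly the paper's argument.
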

\begin{proof} By Zorn's lemma we can find a maximal set of disjoint balls of radius  $R/2$. As any of these balls has  positive measure and $\m$ is $\sigma$--finite, the set  is countable. By maximality any ball of radius $R/2$ intersects one of the $U(p,\fracc{1}{2}R)$. Thus $X = \bigcup_{p\in N} U(p,R)$ holds with a countable $N$. 

It remains to show \eqref{loc-fin}. Let $x\in X$, set 
$$L:=\{p\in N :  x\in U(p,R)\}$$
and let $p\in L$. Then,  the triangle inequality gives 
$$U(p,\fracc{1}{2}R)\subset U(x,\fracc{3}{2} R) \subset U(p,4R) .$$
holds.  In particular, the volume comparison property \eqref{VCR} implies that the following estimate holds for any  $p\in L$:
$$\m(U(x,\fracc{3}{2} R))  \leq \m(U(p,4R))\leq   \gamma  \m(U(p,\fracc{1}{2}R))$$

By disjointness of the balls $U(p,\fracc{1}{2}R)$, $k\in N$,   we furthermore infer
$$\sum_{p\in L} \m(U(p, \fracc{1}{2}R)) \leq \m(U(x,\fracc{3}{2} R)).$$
Putting this together we arrive at
$$\# L \m(U(x,\fracc{3}{2} R)) = \sum_{p\in L} \m(U(x,\fracc{3}{2} R))\leq  \sum_{p\in L} \gamma \m(U(p, \fracc{1}{2}R)) \leq \gamma  \m(U(x,\fracc{3}{2} R)).$$
This gives the desired estimate on $\# L$.  
\end{proof}
\begin{remarks}
\begin{itemize}
     \item[(1)] If the metric measure space is \textit{volume doubling}, i.e., if for all $R>0$ there is $\kappa>0$ such that 
$$\m(U(p,r)) \leq 2^\kappa \m(U(p,\fracc{1}{2}r))$$
holds for all $p\in X$ and $r<R$, then it satisfies VC$[\gamma,R]$ for all $R>0$ for suitable $\gamma$.
See \cite{pesenson} for an estimate in the volume doubling case that is slightly weaker.
\item[(2)] \eqref{VCR} is much weaker than volume doubling. It is true, provided the measures of balls of radius $\fracc{1}{2}R$  and those of radius $R$ have comparable volume. This is evidently true for all regular trees at all scales $R$ (with $R$-dependent $\gamma$, of course).
\item[(3)] The local finiteness condition \eqref{loc-fin} can be strengthend to obtain the following uniform bound: for all $q\in N$,
$$
\#\{ p\in N:U(q,R)\cap U(p,R)\not=\emptyset\}\le \gamma^\sharp,$$
using the stronger volume comparison assumption
\begin{equation}\label{VCRs}\tag{VC$^\sharp$[$R$]}
 \m(U(p,5R)) \leq \gamma^\sharp \m(U(p,\fracc{1}{2}R))\mbox{  for all }p\in X 
\end{equation}
with essentially the same proof as above.

 In particular, under these circumstances we can decompose $N$ into at most $k=\lfloor \gamma^\sharp\rfloor$ sets $N_l$, $l=1, \ldots, k$, such that
 $U(p,R)\cap U(q,R)=\emptyset$ for $l=1, \ldots, k$ and $p,q\in N_l$, $p\not= q$.
\item[(4)] The countability of $N$ is not relevant for certain applications and so we could omit the $\sigma$--finiteness of the measure from the assumption without changing the estimate \eqref{loc-fin}.
\end{itemize}
\end{remarks}

In the set-up of this section, a function $g\in L^2(X,\m)$ is called a \emph{relaxed slope of $f\in L^2(X,\m)$}, if there exists $\tilde{g}\in  L^2(X,\m)$ and a sequence of Lipschitz functions $f_n\in L^2(X,\m)$ such that 
\begin{itemize}
	\item $f_n\to f$ in $L^2(X,\m)$ and $|\mathrm{D}f_n|\to \tilde{g}$ weakly in $L^2(X,\m)$
	\item $\tilde{g}\leq g$,
\end{itemize}
where
$$
|\mathrm{D} h|(x):=\limsup_{y\to x}\frac{|h(x)-h(y)|}{\dm(x,y)}
$$
denotes the local Lipschitz constant of a local Lipschitz function $h:X\to\RR$, which by convention is put $0$, if $x$ is isolated. Such a $g$ is called the \emph{minimal relaxed slope of $f$}, if its $L^2$-norm is minimal amongst all relaxed slopes of $f$, and then one sets $|\D f|_*:=g$. We refer the reader to \cite{ags-2014} for more details, in particular for equivalent definitions of $|\D f|_*$. \\

In this generality, we follow \cite{ags-2014} and define the \emph{Cheeger energy functional} on $L^2(X,\m)$ by
\begin{equation}\label{cheegerenergy}\tag{C}
 \che{f}:=\frac{1}{2}\int_X|\D f|_*^2d\m ,
\end{equation}
set equal to $\infty$, if $f$ has no relaxed slope.
\begin{remarks}\label{remarksvolumedoubling}
 \begin{itemize}
  \item[(1)] The energy functional $\che{\cdot}$ is convex and lower semicontinuous.
  \item[(2)]  the domain of the Cheeger functional
  $$
  W^{1,2}_*(X,\dm,\m)=\{ f\in L^2(X,\m): \che{f}<\infty\}
  $$
  is a Banach space endowed with the norm  
  $$
  \| f\|_{W^{1,2}_*}:=\left(\| f\|^2+ \che{f}\right)^\frac12.$$
  \item[(3)] Under a rather weak  condition on the measure, (4.5) in \cite{ags-2014}, $W^{1,2}_*(X,\dm,\m)$  is dense in $L^2(X,\m)$.
 \end{itemize}
\end{remarks}
As a preparation for the main result of this section, we use the locality of the Cheeger energy to deduce:

\begin{prop}\label{propdecompose} Let $N\subset X$ satisfy \eqref{cov} and \eqref{loc-fin} above. Then  $\cC:=\{ U(p,R): p\in N\}$ and 
$$
\en_C:=\frac{1}{2}\int_C|\D g|_*^2d\m, 
$$
for $C\in\cC$, $g\in L^2(C,\m)$ satisfy the  properties \emph{(D1)} and \emph{(D2)} of Definition \ref{decompose}
 above.
\end{prop}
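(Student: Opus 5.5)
The plan is to verify (D1) and (D2) separately, with $M:=\max\{\gamma,1\}$ (or any $M\ge\gamma$). Property (D1) is immediate. By \eqref{cov} every $x\in X$ lies in some $U(p,R)$, so $\sum_{C\in\cC}1_C(x)\ge 1$. By \eqref{loc-fin} at most $\gamma$ balls contain $x$, so $\sum_{C\in\cC}1_C(x)\le\gamma\le M$. If two points $p\neq q$ give the same ball, we simply count $\cC$ as an indexed family over $N$; this only helps the lower bound and \eqref{loc-fin} still gives the upper bound.

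For (D2), the key step is a \emph{locality statement} for the minimal relaxed slope on an open ball $C=U(p,R)$. Here $C$ carries the restricted metric and measure; an open subset of a Polish extended space is again Polish, so the framework of \cite{ags-2014} applies. The claim is: if $f\in W^{1,2}_*(X,\dm,\m)$, then $f|_C\in W^{1,2}_*(C)$ and
$$|\D (f|_C)|_*\le |\D f|_*|_C\quad \m\text{-a.e. on } C.$$
To prove it, take Lipschitz $f_n\to f$ in $L^2(X,\m)$ with $|\D f_n|\to\tilde g$ weakly and $\tilde g\le |\D f|_*$. The restrictions $f_n|_C$ are Lipschitz on $C$ and converge to $f|_C$ in $L^2(C,\m)$. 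Since $C$ is open, every $y$ close enough to $x\in C$ lies in $C$, so the local Lipschitz constant of $f_n|_C$ computed in $C$ coincides with $|\D f_n|$ on $C$; isolated points of $C$ are isolated in $X$, so the convention $0$ is consistent. Weak convergence in $L^2(X,\m)$ implies weak convergence of the restrictions in $L^2(C,\m)$, as one sees by testing against functions extended by zero. Hence $|\D f|_*|_C$ is a relaxed slope of $f|_C$.

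The point needing care is passing from ``a relaxed slope'' to a \emph{pointwise} bound for the minimal one. The definition only gives minimality of the $L^2$ norm, and we need a pointwise inequality in order to sum over $C$. Here I would invoke the result from \cite{ags-2014} that the minimal relaxed slope is also pointwise a.e.\ minimal among all relaxed slopes. That result follows from the lattice and convexity properties of the set of relaxed slopes. This yields the displayed inequality and in particular $\en_C(f|_C)<\infty$.

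Finally, summing and using Tonelli together with (D1) gives
$$\sum_{C\in\cC}\en_C(f|_C)\le \frac12\sum_{C}\int_C|\D f|_*^2\,d\m=\frac12\int_X\Big(\sum_C 1_C\Big)|\D f|_*^2\,d\m\le M\,\che{f}.$$
This is exactly (D2). The main obstacle is the locality step, specifically the pointwise minimality of $|\D\cdot|_*$, for which we rely on \cite{ags-2014}; the covering estimates themselves are routine.
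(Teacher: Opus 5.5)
Your proposal is correct and follows essentially the same route as the paper. You restrict the approximating Lipschitz sequence to the ball, compare local Lipschitz constants, conclude that the restriction of a relaxed slope of $f$ is a relaxed slope of $f|_C$, and sum the resulting bound using the bounded overlap. You are slightly more careful than the paper, which tacitly uses the a.e.\ pointwise minimality of $|\mathrm{D} f|_*$ from \cite{ags-2014} and does not spell out (D1).
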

\begin{proof}
 Clearly, $C$, equipped with the induced metric and measure is an extended metric measure space itself and it is in this way that the integral in the definition of $\en_C$ above is to be understood. We now use subscript $C$ to denote the notions in this subspace and note the following evident facts:
 For $h\in L^2(X,\m)$ Lipshitz,
 $$
 |\D(h|_C)|_C\le |\D(h)||_C .
 $$
 Therefore, for any relaxed slope $g\in L^2(X,\m)$ of a given $f\in L^2(X,\m)$, $g|C$ is a relaxed slope of $f|C$, and so the minimal relaxed slope $|\D (f|_C)|_{C,*}$ calculated in the subspace $C$ satisfies: 
 $$
 |\D (f|_C)|_{C,*}\le |\D f|_*1_C,
 $$
 which gives 
 \begin{align*}
  \sum_C\en_C(f|_C)&=\frac12\sum_C\int_C|\D (f|_C)|_{C,*}^2d\m\\
  &\le \frac12\sum_C\int_X |\D f|_*^2 1_Cd\m\\
  &\le k \frac12\int_X |\D f|_*^2 d\m
 \end{align*}
\end{proof}

Now everything is prepared for our main result on metric measure spaces. 
We denote the average of $f$ on the ball $B$ by $A_B (f)$ i.e. 
$$A_B (f) = \frac{1}{\m(B)} \int_B f dm.$$

We say that a \emph{Poincaré inequality with constant} $\lambda$ at scale $R$ holds, if for every ball $B=U(p,R)\subset X$ and every $f\in L^2(B,\m)$
\begin{equation}\label{pilR}\tag{PI$[\lambda,R]$}
\lambda\int_B|f-A_B (f)|^2d\m\le \frac12\int_B|\D f|_*^2d\m .
\end{equation}

\begin{theorem}\label{mms}Assume that $R>0$ and $(X,\dm,\m)$ as above satisfies the volume comparison property \eqref{VCR} with constant $\gamma$ and the Poincaré inequality \eqref{pilR}.
Let $\varrho >0$. Then, for all $f\in L^2 (X,\m)$, 
$$
\che{f}\le \frac{\lambda\varrho}{\lfloor \gamma\rfloor}\| f\|^2
\quad\Longrightarrow\quad \| f\|^2\le\frac{10 \lfloor \gamma\rfloor}{\varrho}\| f 1_Y\|^2,
$$
whenever $Y\subset X$ is measurable and satisfies $\m(Y\cap B)\geq \varrho \m(B)$ for any ball  $B$ of  radius $R$.
\end{theorem}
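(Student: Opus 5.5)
The plan is to obtain the statement from the abstract Theorem~\ref{theorem:main-technical}, applied to the covering of Proposition~\ref{uniformcov}, and then to recover the constants of Theorem~\ref{mms} by a sharper local step.

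\textbf{Step 1: the decomposition.} By \eqref{VCR} and Proposition~\ref{uniformcov} there is a countable $N\subset X$ with \eqref{cov} and \eqref{loc-fin}. Since the overlap count is an integer, \eqref{loc-fin} gives at most $\lfloor\gamma\rfloor$ overlaps. So $\cC:=\{U(p,R):p\in N\}$ satisfies (D1) with $M=\lfloor\gamma\rfloor$. Proposition~\ref{propdecompose} gives (D2) with the same $M$, using the local Cheeger energies $\en_C$. In its proof the constant $k$ is the overlap bound $\lfloor\gamma\rfloor$. The hypothesis \eqref{pilR} is exactly (D3) for every ball $C\in\cC$, with parameter $\lambda$. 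Finally, the assumption $\m(Y\cap B)\ge\varrho\,\m(B)$ for all balls of radius $R$ says that $Y$ is $\varrho$-thick with respect to $\cC$. Hence $\cC$ is a $(\lambda,\lfloor\gamma\rfloor)$-decomposition, and Theorem~\ref{theorem:main-technical} applies verbatim.

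\textbf{Step 2: rerun the combination argument with the stated threshold.} Let $f_k$ denote the restriction of $f$ to $C_k\in\cC$. From $\che{f}\le\frac{\lambda\varrho}{\lfloor\gamma\rfloor}\|f\|^2$, (D2) and $\|f\|^2\le\sum_k\|f_k\|^2$ we get
$$\sum_k\en_k(f_k)\le\lambda\varrho\sum_k\|f_k\|^2 .$$
Apply Naißigkait's Lemma~\ref{nice} with $a_k=\lambda\varrho\|f_k\|^2$, $b_k=\en_k(f_k)$ and a parameter $c\in(0,1)$. On the good set $S$ this gives $\en_k(f_k)\le\frac{\lambda\varrho}{c}\|f_k\|^2$, and $S$ carries at least the fraction $1-c$ of $\sum_k\|f_k\|^2$. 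The closing chain is then the one from the proof of Theorem~\ref{theorem:main-technical}:
$$\lfloor\gamma\rfloor\,\|f1_Y\|^2\ \ge\ \sum_{k\in S}\|f_k1_Y\|^2 .$$
It combines with a local bound $\|f_k1_Y\|^2\ge\beta\,\|f_k\|^2$ on $S$ to give $\|f\|^2\le\frac{\lfloor\gamma\rfloor}{(1-c)\beta}\|f1_Y\|^2$. The target constant $10\lfloor\gamma\rfloor/\varrho$ therefore needs $(1-c)\beta\ge\varrho/10$.

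\textbf{Step 3: the main obstacle, a sharper finite-measure proposition.} The finite-measure proposition preceding Lemma~\ref{nice} only covers the local threshold $\frac{\lambda\varrho}{4}$, whereas Step~2 produces the threshold $\frac{\lambda\varrho}{c}>\lambda\varrho$. I would first redo that proof with a general threshold $\en(g)\le\lambda t\|g\|^2$. The splitting $g=A(g)\underline 1+\psi$ gives $\|\psi\|^2\le t\|g\|^2$ and
$$\|g1_Y\|\ \ge\ \bigl(\sqrt{\varrho(1-t)}-\sqrt t\,\bigr)\|g\|,$$
and this is positive only when $t<\varrho/(1+\varrho)$. With $t=\varrho/c\ge\varrho$ this estimate is vacuous. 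So the bound $\|\psi\|\le\sqrt t\|g\|$ cannot close the gap.

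I would then try to exploit finer information. For instance, $\psi\perp\underline 1$ could be combined with the $L^2$-distribution of $\psi$ on $Y$ versus $C\setminus Y$, in the hope of getting a bound of the form
$$\|g1_Y\|^2\ \ge\ \varrho\,\|g\|^2-\|\psi\|^2 .$$
A second option is to choose the cut-off in Naißigkait's Lemma adaptively, ball by ball, rather than with a single $c$.

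I expect exactly this matching of the threshold $\frac{\lambda\varrho}{\lfloor\gamma\rfloor}$ to be the delicate point. If no sharpening of the local step is available, the method as developed in Section~\ref{section:technical-core} yields the implication only with the threshold $\frac{\lambda\varrho}{8\lfloor\gamma\rfloor}$. In that case I would test the stated threshold on a single interval, where $X$ is one ball and the Neumann eigenfunctions from the optimality remark are available, to see whether the constants as written can hold at all.
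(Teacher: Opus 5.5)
\paragraph{Review.} Your Step~1 is exactly the paper's proof: the cover from Proposition~\ref{uniformcov}, (D1)--(D3) with $M=\lfloor\gamma\rfloor$, thickness of $Y$, and an appeal to Theorem~\ref{theorem:main-technical}. As you noticed, that appeal only gives the implication under the hypothesis $\che{f}\le\frac{\lambda\varrho}{8\lfloor\gamma\rfloor}\|f\|^2$. The paper's one-line proof silently drops the factor $8$.

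The real gap is in Steps~2--3. No sharper local step and no adaptive cut-off can reach the threshold $\frac{\lambda\varrho}{\lfloor\gamma\rfloor}$, because the statement with that threshold is false. The single-interval test you proposed shows this:
\begin{itemize}
\item Take $X=[0,\pi]$ with Euclidean distance and Lebesgue measure, and any $R>2\pi$. Every ball of radius $R/2$ is all of $X$, so \eqref{VCR} holds with $\gamma=1$.
\item Here $|\D f|_*=|f'|$, and the Neumann (Wirtinger) inequality gives $\int_0^\pi|f-A(f)|^2\,dx\le\int_0^\pi|f'|^2\,dx$. So \eqref{pilR} holds with $\lambda=\frac12$.
\item Let $\varrho=\frac13$ and $Y=[0,\frac{\pi}{3}]$, so $\m(Y\cap B)=\varrho\,\m(B)$ for the only ball $B=X$. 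Let $f(x)=1-\cos x$.
\item Then $\|f\|^2=\frac{3\pi}{2}$ and $\che{f}=\frac12\int_0^\pi\sin^2x\,dx=\frac{\pi}{4}=\frac{\lambda\varrho}{\lfloor\gamma\rfloor}\|f\|^2$, so the hypothesis holds. Yet
\[
\frac{10\lfloor\gamma\rfloor}{\varrho}\|f1_Y\|^2=30\Bigl(\frac{\pi}{2}-\frac{7\sqrt3}{8}\Bigr)\approx1.66<4.71\approx\|f\|^2 .
\]
\end{itemize}
With a single ball, Lemma~\ref{nice} plays no role. So the vacuity you found in Step~3 is a genuine local obstruction, not a weakness of your estimate.

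\paragraph{What can be proved.} Your Step~1, and the paper's argument, prove the implication with threshold $\frac{\lambda\varrho}{8\lfloor\gamma\rfloor}$ and the same conclusion $\|f\|^2\le\frac{10\lfloor\gamma\rfloor}{\varrho}\|f1_Y\|^2$. You should have stated and proved that corrected version rather than leaving the argument open.

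\paragraph{A slip in the local step.} Your general-$t$ bound at $t=\frac{\varrho}{4}$ gives only $\bigl(\frac{\sqrt3-1}{2}\bigr)^2\varrho\approx0.134\,\varrho$, not $\frac{\varrho}{5}$. The paper's proof of the finite-measure proposition makes the same arithmetic slip. The constant $5$ is nonetheless correct, by the following argument.
\begin{itemize}
\item Normalize $\m_0(X_0)=\|g\|=1$ and write $\varrho'=\m_0(Y)\ge\varrho$.
\item Use $\psi\perp\underline1$ together with Cauchy--Schwarz on $Y$ and on $X_0\setminus Y$. This gives $|\langle\psi,1_Y\rangle|\le\sqrt{t\varrho'(1-\varrho')}$.
\item Hence
\[
\|g1_Y\|\ \ge\ \frac{|\langle g,1_Y\rangle|}{\sqrt{\varrho'}}\ \ge\ \sqrt{\varrho(1-t)}-\sqrt{t(1-\varrho)} .
\]
\item For $t=\frac{\varrho}{4}$ the right-hand side is at least $\frac{\sqrt\varrho}{2}$, so $\|g1_Y\|^2\ge\frac{\varrho}{4}\|g\|^2$.
\end{itemize}
This bound is sharp over all forms satisfying \eqref{PI}, and it vanishes exactly at $t=\varrho$. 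That is another way to see why the stated threshold is out of reach.
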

\begin{proof}
 Using \eqref{VCR} and Proposition \ref{uniformcov} we find a countable $N\subset X$ such that 
 $$\cC=\{ U(p,R): p\in N\}$$ 
 as in Proposition \ref{propdecompose} satisfies (D1) and (D2) of Definition \ref{decompose}. Due to \eqref{pilR}, (D3) holds as well. This means that $\cC$ is a $(\lambda,M)$-decomposition with $M=\lfloor \gamma\rfloor$, so we can apply Theorem \ref{theorem:main-technical} giving the claim.
\end{proof}
\begin{remarks}
 \begin{itemize}
  \item[(a)] We thus obtain an uncertainty principle for functions with low Cheeger energy of the form \eqref{UP} with full control of the constants, provided the metric measure spaces satisfies
  a Poincaré inequality for balls and is volume doubling, by combining the preceding Theorem with (1) of Remarks \ref{remarksvolumedoubling}.
  \item[(b)] For many applications, in the Poincaré inequality for balls a specific dependence on the radius is supposed to hold. This is not relevant for our purposes here; it would simply mean that $\lambda$ in the estimate of Theorem \ref{theorem:main-technical} can be expressed in terms of $R$.
  \item[(c)] Consider weak type $(2,2)$ Poincaré inequalities of the form
	
  \begin{equation}\label{piw}\tag{weak-PI}
\lambda\int_B|f-A_B (f)|^2d\ m\le \frac12\int_{\Lambda B}|\D f|_*^2d\ m ,
\end{equation}
where $\Lambda >1$ and, for $B=U(p,R)$ one sets $\Lambda B=U(p,\Lambda R)$ as usual, disregarding the slight abuse of notation, since neither $p\in X$ nor $R$ are determined by the set $B$. 

With a proof just like in Proposition \ref{propdecompose}, a suitable volume comparison assumption weaker than volume doubling implies a uniform local bound on the number of balls of radius $\Lambda R$ meeting each fixed $x\in X$. This will give an estimate as in Theorem \ref{theorem:main-technical} above, even though the set-up is not, strictly speaking, covered by our abstract result. We refrained from including this slightly more general situation in view of notational simplicity.  
 \end{itemize}
\end{remarks}
We end this section with a direct consequence of Theorem 2.6 which is very much in the spirit of  \cite{BST}, where uncertainty principles are related with positivity of Schrödinger operators.
\begin{coro}
 Under the assumptions of the previous theorem, it follows that
 \begin{equation*}
  \che{f} + \| f 1_Y\|^2\ge  \frac{\varrho}{\lfloor\gamma\rfloor}\min\{\lambda,\frac{1}{10}\}\| f\|^2\mbox{  for all  }f\in L^2(X,\m).
 \end{equation*}
\end{coro}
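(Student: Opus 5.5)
We prove the corollary by splitting into two cases according to whether the hypothesis of Theorem \ref{mms} holds for $f$. Write $M:=\lfloor\gamma\rfloor$ and set $\delta:=\frac{\varrho}{M}\min\{\lambda,\frac{1}{10}\}$. If $\che{f}=\infty$ there is nothing to prove, so we may assume $f\in W^{1,2}_*(X,\dm,\m)$.

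\emph{Case 1: $\che{f}\le \frac{\lambda\varrho}{M}\|f\|^2$.} Then Theorem \ref{mms} applies and gives $\|f\|^2\le \frac{10M}{\varrho}\|f1_Y\|^2$, that is,
$$\|f1_Y\|^2\ge \frac{\varrho}{10M}\|f\|^2\ge \delta\|f\|^2.$$
Since $\che{f}\ge 0$, adding it to the left-hand side yields the claim.

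\emph{Case 2: $\che{f}> \frac{\lambda\varrho}{M}\|f\|^2$.} Here we use $\|f1_Y\|^2\ge 0$ and obtain directly
$$\che{f}+\|f1_Y\|^2\ge \frac{\lambda\varrho}{M}\|f\|^2\ge\delta\|f\|^2.$$

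Combining the two cases gives the stated inequality with constant $\frac{\varrho}{M}\min\{\lambda,\frac1{10}\}$, which is exactly the constant in the corollary. The argument is elementary: the only care needed is to check that the thresholds in Theorem \ref{mms} match the minimum, and they do. Note also that no quadratic structure of $\che{\cdot}$ is used, so the dichotomy works for the general, possibly non-quadratic, Cheeger energy.
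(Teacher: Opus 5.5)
Your two-case split is exactly the \emph{direct consequence} of Theorem~\ref{mms} that the paper intends (the paper gives no further argument), and it is valid relative to that theorem as printed. Be aware, though, that Theorem~\ref{mms} is deduced from Theorem~\ref{theorem:main-technical}, whose energy threshold is $\frac{\lambda\varrho}{8M}$ rather than $\frac{\lambda\varrho}{M}$, so the same dichotomy really only yields the constant $\frac{\varrho}{\lfloor\gamma\rfloor}\min\{\frac{\lambda}{8},\frac{1}{10}\}$; your remark that the thresholds match holds only for the misprinted statement of Theorem~\ref{mms}, and the printed constant appears to be too optimistic in general (e.g.\ a thin-necked dumbbell of diameter less than $R/2$, rescaled so that $\lambda=\frac{1}{10}$, with $Y$ one of its lobes, seems to violate it).
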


\section{Application to quasi-regular Dirichlet forms}\label{section:Dirchlet}
We now turn to Dirichlet forms. We will freely use standard background from Dirichlet forms as can be found, e.g., in the textbooks \cite{Fuk,maroeckner} and follow \cite{dello,kuwae-98,kuwae} for the more technical aspects of quasi-regularity. Let us briefly introduce the latter concept. We always assume that $X$ is a separable, metric space endowed with a Radon measure $\m$ of full support. The metric is not needed for quasi-regularity, we use it for decomposing the space into balls.

In this section $(\En ,\F)$ denotes a symmetric Dirichlet form on $L^2(X,\m)$ with domain $\F$. It is called \emph{quasi-regular} provided the following conditions hold:
\begin{itemize}
 \item[(QR1)] There exists an $\En$--nest $(F_n)_{n\in\NN}$ consisting of compact sets, where  $\En$--nest means, that any $u\in\F$ can be approximated (in the form norm) by $u_n\in\F$ such that $u_n=0$ on $X\setminus F_n$.
 \item[(QR2)] There exists a dense subset $\F_0\subset \F$  every element of which has a quasi-continuous version.
 \item[(QR3)] There exists a polar set $N\subset X$ and a countable set of elements of $\F$ with quasi-continuous versions that separate the points of $X\setminus N$. 
\end{itemize}
\begin{remark}
 Quasi-regularity has been invented to deal with infinite dimensional situations and still retain most of the tools available for \emph{regular} Dirichlet forms. Regularity means that the underlying space is locally compact and there is a subset of $\F\cap C_c(X)$ that is dense both in $C_c(X)$ with respect to $\|\cdot\|_\infty$ and in $\F$ with repect to the energy norm.
\end{remark}
To construct the forms $\En_B$ for open balls, we next recall the Beurling--Deny--Le Jan representation of a Dirichlet form. We denote by $\mathfrak{M}^{\pm}_{\sigma}(Y,\cal{N}_\En)$ the set of extended $\sigma$-finite signed Borel measures on $Y$ that charge no $\En$-polar sets and by $\F_{loc}^\bullet$ the broad local space associated with $(\En,\F)$, see \cite{kuwae-98,dello} for details.
\begin{theorem*}[Theorems 5.1, 5.2, Lemma 5.1, 5.2  from \cite{kuwae-98} and Proposition 2.12 from \cite{dello}]
Let $(\En,\F)$ be a quasi--regular Dirichlet form. Then, the following holds: 
\\ (a)  Then there exist unique $\En^c, J, k$ satisfying
\begin{itemize}
\item[(i)] $\En^c$ is a positive definite symmetric bilinear form which is strongly local in
the general sense, i.e., if $u,v\in\F$ with $u = const$, $\m$-a.e. on a $\En$--neighbourhood of $\mathrm{supp}[v]$, then $\En^c(u,v)=0$.
\item[(ii)] $J$ is a $\sigma$-finite symmetric positive Borel measure on $X\times X\setminus\bigtriangleup$, where $\bigtriangleup$ is the diagonal, such that $J(N\times X)=0$ for any polar set $N$.
\item[(iii)] $k\in \mathfrak{M}^{\pm}_{\sigma}(X,\cal{N}_\En)$ is positive.
\item[(iv)] For any $u,v\in\F$:
$$
\En(u,v)=\En^c(u,v)+\int_{X\times X\setminus\bigtriangleup}\left(\tilde{u}(x)-\tilde{u}(y)\right)\left(\tilde{v}(x)-\tilde{v}(y)\right) dJ(x,y)+\int_X\tilde{u}(x)\tilde{v}(x) dk(x),
$$
where $\tilde{u}, \tilde{v}$ denote quasi-continuous versions of $u,v$, respectively.
\end{itemize}

(b)  There is a bilinear map,  
$$
\F_{loc}^\bullet\to \mathfrak{M}^{\pm}_{\sigma}(X,{\cal{N}}_\En), (f,g)\mapsto
\mu^c_{<f,g>},
$$
representing $\En^c$ in the sense that
$$
\En(f,g)=\mu^c_{<f,g>}(X) .
$$
\end{theorem*}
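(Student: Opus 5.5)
The plan is to avoid redoing the Beurling--Deny theory from scratch and instead \emph{transfer} it from the regular case. By the transfer method for quasi-regular forms (see \cite{maroeckner}, Chapter VI, and \cite{kuwae-98}), (QR1)--(QR3) give a locally compact separable metric space $X^\sharp$, a Radon measure $\m^\sharp$ on it, and a quasi-homeomorphism $j\colon X_1\to X^\sharp$. Here $X_1=\bigcup_n F_n$ is $\En$-quasi-open with polar complement, and the compacts $F_n$ come from (QR1). The map $j$ sends $\m$ to $\m^\sharp$ and turns $(\En,\F)$ into a \emph{regular} Dirichlet form $(\En^\sharp,\F^\sharp)$ on $L^2(X^\sharp,\m^\sharp)$. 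It also maps quasi-continuous versions to quasi-continuous versions and polar sets to polar sets. The first step is to set this up carefully, in particular checking that the quasi-continuous versions from (QR2) and the separating family from (QR3) are carried along by $j$.

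In the second step I would apply the classical Beurling--Deny decomposition for regular forms (\cite{Fuk}, Theorem 3.2.1 and Lemma 4.5.4) to $\En^\sharp$. For $u,v\in\F^\sharp$ this gives a strongly local part, a jump measure $J^\sharp$ off the diagonal and a killing measure $k^\sharp$, with the integrals taken against quasi-continuous versions. I would then pull everything back along $j$: set $J:=(j^{-1}\times j^{-1})_*J^\sharp$, $k:=(j^{-1})_*k^\sharp$ and $\En^c(u,v):=\En^{\sharp,c}(u\circ j^{-1},v\circ j^{-1})$. Since $X\setminus X_1$ is polar and none of the measures charge polar sets, this yields (ii)--(iv). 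Uniqueness follows as in the regular case: $J$ and $k$ are determined by $\En$ on functions with disjoint quasi-supports and by the behaviour near the diagonal, and then $\En^c$ is what remains.

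The main obstacle is the locality statement (i) in its \emph{general} form, namely with $\En$-neighbourhoods of the quasi-support rather than topological neighbourhoods of the support. Fukushima's strong locality is phrased with compact supports and topological neighbourhoods in $X^\sharp$, so I must show that an $\En$-quasi-open neighbourhood of $\mathrm{supp}[v]$ in $X$ corresponds under $j$ to a quasi-open set in $X^\sharp$. On such a set the regular strong locality still applies after approximating by nests. The approach I would try first is to approximate $v$ in form norm by functions $v_n$ vanishing outside $F_n$ (again using (QR1)), apply locality on each compact piece, and pass to the limit.

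For (b), I would first define $\mu^c_{\langle f,g\rangle}$ on $\F$ by polarization of the strongly local energy measures of the regular image. I would then extend it to $\F^\bullet_{loc}$ by localisation. If $f$ agrees with some $f_G\in\F$ on each quasi-open set $G$ of a nest, I define $\mu^c_{\langle f,f\rangle}$ on $G$ as $\mu^c_{\langle f_G,f_G\rangle}$. This is well defined by the strong locality from (i), and the pieces glue to a $\sigma$-finite measure charging no polar sets; this is the content of Proposition 2.12 in \cite{dello}. Evaluating the result at $X$ for $f,g\in\F$ recovers $\En^c(f,g)$.
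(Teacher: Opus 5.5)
Note first that the paper gives no proof of this statement; it is imported from \cite{kuwae-98} and \cite{dello}. Your architecture is the standard route: a quasi-homeomorphism onto a regular form on $X^\sharp$, Fukushima's Beurling--Deny formula there, pull-back of $J^\sharp$, $k^\sharp$, $\En^{\sharp,c}$, then localisation to $\F^\bullet_{loc}$. Parts (ii)--(iv) come out as you describe.

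Uniqueness is cleanest by transfer. A competing triple on $X$ is carried to $X^\sharp$, where locality in the general sense implies Fukushima's topological version, because open sets are quasi-open and the quasi-support lies in the topological support. Fukushima's uniqueness then applies.

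You also rightly read (b) as $\En^c(f,g)=\mu^c_{\langle f,g\rangle}(X)$ for $f,g\in\F$, up to the usual factor $\tfrac12$. With $\En$, as printed, it fails whenever $J\neq 0$ or $k\neq 0$.

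\textbf{The gap in (i).} The genuine gap is in (i), exactly where you flagged the main obstacle, and the remedy you propose does not close it.

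\begin{itemize}
\item[] Fukushima's strong locality needs $u$ to be constant on an \emph{open} set containing $\mathrm{supp}[v]$. A quasi-open $G$ is, up to a polar set, finely open. It need not contain any topological neighbourhood of a set it contains.
\item[] Approximating $v$ by $v_n$ vanishing off $F_n$ does not change this. Those $v_n$ need not even have support in $G$. Even if they did, $G\cap F_n$ is at best relatively open in $F_n$, so there is still no open set on which $u$ is constant, and the topological locality cannot be invoked.
\end{itemize}

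What is missing is the \emph{fine} locality of energy measures in the regular case. Namely, $\mu^c_{\langle w\rangle}$ charges no polar set and does not charge level sets $\{\tilde w=a\}$. This is the energy image density property of Bouleau--Hirsch type, proved via LeJan's chain rule. Given this, (i) is short:
\begin{itemize}
\item[] From $\tilde u=c$ q.e.\ on $G$ you get $\mu^c_{\langle u\rangle}(G)=0$.
\item[] Since $v=0$ a.e.\ on the quasi-open set $X\setminus\mathrm{supp}[v]$, you get $\tilde v=0$ q.e.\ there. Hence $\mu^c_{\langle v\rangle}(X\setminus G)=0$.
\item[] Cauchy--Schwarz, $|\mu^c_{\langle u,v\rangle}|(A)\le\mu^c_{\langle u\rangle}(A)^{1/2}\mu^c_{\langle v\rangle}(A)^{1/2}$, applied to $A=G$ and $A=X\setminus G$, gives $\En^c(u,v)=0$.
\end{itemize}
This transfers back to $X$ because $j$ preserves quasi-open sets, polar sets and quasi-continuous versions.

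\textbf{Consequence for (b).} The same measure-level statement is what your gluing in (b) actually needs. Calling it ``well defined by (i)'' is not enough. You must know $1_H\mu^c_{\langle f_G\rangle}=1_H\mu^c_{\langle f_{G'}\rangle}$ on the quasi-open set $H=G\cap G'$, not merely the total-mass identity in (i). This follows from $\mu^c_{\langle f_G-f_{G'}\rangle}(H)=0$, which is fine locality again since $f_G-f_{G'}=0$ q.e.\ on $H$, together with Cauchy--Schwarz.
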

With these data at hand, we can define $\En_B$ for any open ball (actually any quasi-open set) $B\subset X$ by
\begin{align*}
H^1(B)& :=\{f\in L^2(B,\m) : \exists f^\#\in \F\mbox{  such that  }f^\#|_B=f\}\\
\En_B(f)& := \mu^c_{<f^\#, <f^\#>}(B)+ \int_{B\times B\setminus\bigtriangleup}\left(\tilde{f}(x)-\tilde{f}(y)\right)^2 dJ(x,y)+\int_B\tilde{f}^2(x) dk(x)
\end{align*}
Note that we used the somewhat unconventional notation $ H^1(B)$ for the active reflected space, as it is called in Dirichlet form theory (see, e.g.,  \cite{kuwae} for details) to stress the analogy with the classical situation.

\begin{theorem}
Let $\mathcal{E}$ be a quasi-regular Dirichlet form  on the  metric measure space $(X,\m,\dm)$ as above. Assume that $(X,\m,\dm)$ satisfies the volume comparison condition \eqref{VCR} with constant $\gamma$ and  $\En$ satisfies a local Poincar\'{e} inequality at scale $R$ with constant $\lambda$ in the sense that for every open ball $B=U(p,R)$ and $f\in H^1(B)$:
$$
\lambda\int_B|f-A_B (f)|^2d\m\le \En_B(f) .
$$
Then, for all $f\in L^2 (X,\m)$, 
$$
\En(f)\le \frac{\lambda\varrho}{\lfloor \gamma\rfloor}\| f\|^2
\quad\Longrightarrow\quad \| f\|^2\le\frac{10 \lfloor \gamma\rfloor}{\varrho}\| f 1_Y\|^2,
$$
whenever $Y\subset X$ is measurable and satisfies $\m(Y\cap B)\geq \varrho \m(B)$ for any ball  $B$ of  radius $R$.
\end{theorem}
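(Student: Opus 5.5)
The plan is to reduce the statement to Theorem \ref{theorem:main-technical}, as in the proof of Theorem \ref{mms}. The Dirichlet-form structure enters only through checking the decomposition axioms (D1)--(D3).

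\emph{Step 1: the cover.} Since $(X,\dm,\m)$ satisfies \eqref{VCR}, Proposition \ref{uniformcov} gives a countable $N\subset X$ such that $\cC:=\{U(p,R):p\in N\}$ covers $X$ and each point lies in at most $\lfloor\gamma\rfloor$ of these balls. Hence (D1) holds with $M=\lfloor\gamma\rfloor$. Each ball has finite measure because $\m$ is Radon and $X$ is separable metric. If needed, I would first shrink $R$ slightly so that the balls are relatively compact.

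\emph{Step 2: (D2) via the Beurling--Deny--Le Jan decomposition.} For $f\in\F$ I take $f^\#=f$, so $f|_B\in H^1(B)$ for every $B\in\cC$. Then
\[
\sum_{B\in\cC}\En_B(f|_B)=\sum_B\Big(\mu^c_{<f,f>}(B)+\int_{B\times B\setminus\bigtriangleup}(\tilde f(x)-\tilde f(y))^2\,dJ+\int_B\tilde f^2\,dk\Big).
\]
Each term is bounded using (D1) and positivity of the measures. First, $\sum_B\mu^c_{<f,f>}(B)=\int\sum_B1_B\,d\mu^c_{<f,f>}\le M\,\mu^c_{<f,f>}(X)=M\En^c(f)$. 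Second, $\sum_B1_{B\times B}\le\sum_B1_{B\times X}\le M$, so the jump parts add up to at most $M$ times the global jump part. Third, the killing parts are controlled in the same way. Together this gives $\frac1M\sum_B\En_B(f|_B)\le\En(f)$.

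There is one technical point. $\En_B$ must not depend on the choice of extension $f^\#$. This should follow from strong locality of $\mu^c$ on quasi-open sets (cf.\ the cited results of \cite{kuwae-98,dello}), together with the fact that the jump and killing integrals only see $\tilde f$ on $B$. (D3) is exactly the assumed local Poincaré inequality.

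\emph{Step 3: applying the abstract theorem, and the constant.} Thickness of $Y$ with respect to all balls of radius $R$ gives $\varrho$-thickness with respect to $\cC$. Theorem \ref{theorem:main-technical} then applies directly with threshold $\lambda\varrho/(8\lfloor\gamma\rfloor)$ and conclusion constant $10\lfloor\gamma\rfloor/\varrho$.

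The main obstacle is the stated threshold $\lambda\varrho/\lfloor\gamma\rfloor$, which is a factor $8$ larger. To reach it I would redo the proof of Theorem \ref{theorem:main-technical} with the constants re-optimized. I would apply Naißigkait's Lemma to $b_k=\En_k(f_k)$ and $a_k=\theta\|f_k\|^2$, with $\theta$ chosen as large as possible. I would also rerun the finite-measure Proposition with a general threshold $\theta'\lambda$ in place of $\lambda\varrho/4$, keeping track of $\|\psi\|^2\le\theta'$.

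I expect this to fail when $M=1$ (a single ball). In that case the hypothesis $\En(f)\le\lambda\varrho\|f\|^2$ allows $\|\psi\|^2$ up to $\varrho\|f\|^2$, and $f=c+\psi$ can then be made to vanish on $Y$. I would therefore first test this degenerate case. If it confirms the obstruction, the honest outcome of the argument is the statement with the constant $\lambda\varrho/(8\lfloor\gamma\rfloor)$ from Theorem \ref{theorem:main-technical}. The threshold as printed then appears to be a transcription slip relative to Theorem \ref{theorem:main-technical}, mirroring Theorem \ref{mms}, and I would not expect to establish it in that form.
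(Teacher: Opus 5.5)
Your reduction is exactly the paper's proof. The paper only remarks that one argues as for Theorem~\ref{mms}: take the cover $\{U(p,R):p\in N\}$ from Proposition~\ref{uniformcov}, check (D1)--(D3) with $M=\lfloor\gamma\rfloor$, and apply Theorem~\ref{theorem:main-technical}. Your verification of (D2) through the three Beurling--Deny--Le Jan pieces is more explicit than anything in the paper, and it is correct. It uses $\sum_B 1_{B\times B}(x,y)\le\sum_B 1_B(x)\le M$ together with positivity of $\mu^c_{<f,f>}$, $J$ and $k$.

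Your diagnosis of the constant is also right, and it applies verbatim to the paper's own argument (and to Theorem~\ref{mms}). This route only yields the threshold $\lambda\varrho/(8\lfloor\gamma\rfloor)$, and the printed threshold $\lambda\varrho/\lfloor\gamma\rfloor$ is false, already in the case $M=1$ you suspected. Concretely, let $X=[0,1]$ with Lebesgue measure and
\[
\En(f)=\lambda\|f-A(f)\underline{1}\|^2=\frac{\lambda}{2}\int_0^1\!\!\int_0^1 (f(x)-f(y))^2\,dx\,dy .
\]
This is a bounded pure-jump Dirichlet form with domain $L^2$. It is regular, hence quasi-regular, since $C([0,1])$ is a core.
\begin{itemize}
\item For $R>2$ every ball of radius $R/2$, $R$ or $4R$ is all of $X$, so \eqref{VCR} holds with $\gamma=1$.
\item For the only ball $B=X$ one has $\En_B=\En$, so the local Poincar\'e inequality holds with constant $\lambda$, even with equality.
\item For $0<\varrho<1$ the set $Y=[0,\varrho]$ is $\varrho$-thick, and $f=1_{(\varrho,1]}$ satisfies $\En(f)=\lambda\varrho(1-\varrho)=\lambda\varrho\|f\|^2$, while $f1_Y=0$.
\end{itemize}
So the statement that is actually proved, by you and by the paper, is the one in your Step~3 with threshold $\lambda\varrho/(8\lfloor\gamma\rfloor)$.

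One inaccuracy remains in Step~1. Finiteness of $\m(U(p,R))$ does not follow from $\m$ being Radon on a separable metric space. For example, take $\RR$ with the metric $\min\{|x-y|,1\}$, Lebesgue measure and $R>1$; every ball of radius $R$ is then all of $\RR$. Your fallback of shrinking $R$ is not permitted either, because \eqref{VCR} and the Poincar\'e inequality are assumed only at scale $R$. Finiteness of the balls has to be taken as a standing assumption, as in Section~\ref{section:application-metric-measure-space}. It is implicit in the hypothesis anyway, since that hypothesis involves $A_B(f)$.
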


The proof goes along the exact same lines as the proof of Theorem \ref{mms} above.

\begin{remark} The reader should think of a vanishing killing term, i.e., the case $k=0$ even though this is not formally necessary. The reason is that the presence of $k$ might cause the set of functions of low energy to be void.  On the other hand, in the generality of the Theorem above, it cannot be guaranteed that this set of functions is nonempty.
\end{remark}

\begin{remarks}

\begin{itemize}

\item[(a)] Regular Dirichlet forms arise in many situations. In particular, 
the  Laplace Beltrami operator on a Riemannian manifold comes from a regular Dirichlet form with  
$\mathcal{E}(f) = \int_X \nabla f d\mathrm{vol}$
So, the theorem applies to those classical situations, provided volume doubling and a suitable Poincar\'{e} inequality is valid. This in turn is guaranteed by lower bounds on the Ricci curvature. Volume doubling is a consequence of the Bishop-Gromov comparison theorem, see \cite{saloff}, Thm 5.6.4 for a proof and \cite{wei} for a survey that points at a number of generalizations.\\
In \cite{saloff}, Thm 5.6.5  one can also find a discussion of the Poincaré inequality, which is due to Buser, \cite{buser}.

\item[(b)] The setup from \cite{pesenson} is clearly more restrictive than our framework here.

\item[(c)] As a very special case, we can treat the setup from \cite{stolzman}. Clearly, the condition there is more incisive as ours here and the resulting estimate slightly weaker. At the same time, the much more complicated method of proof gives additional information that is not accessible by the approach here, namely lower bounds on Dirichlet Laplacians.

\item[(d)] We should also mention the recent \cite{BST} which equally well applies to general Schrödinger operators in Euclidean space. The uncertainty estimate obtained there works for the same class of operators and sets as in the present case, giving almost the same estimates. See the analogous discussion of the graph case in the subsequent section.

\item[(e)] Forms  with $\mathcal{E}(f) = \int_{X\times X\setminus \triangle (X)} (\widetilde(f) (x) -\widetilde{f}(y))^2 dJ (x,y)$
 arise in the study of graphs. So, the theorem applies to this case as well, provided volume doubling and Poincar\'{e} inequality is valid. In fact, we will provide a rather general result for graphs in the next section based on a completeness and a local compactness condition.
\item[(e)] Symmetry of the form is not essential; we could treat semi-Dirichlet forms in the same manner as above, partly replacing results from \cite{kuwae-98} by the non-symmetric analog in \cite{hu-ma-sun}.

\end{itemize}

\end{remarks}

\section{Application to graphs}\label{section-graph}
In this section we discuss an application of our method to graphs. Our method relies on a decomposition and a Poincar\'{e} inequality. The work \cite{LSS-17} provides a decomposition (Voronoi construction)  for suitable graphs and the work  \cite{LSS-18} provides a Poincar\'{e} inequality. Combining these two for - what we call -  $(R,\varrho)$-thick sets then allows us to apply our method do conclude a lower  bound. Details are given next.  We stress that - unlike most of the existing literature on graphs -  we do not assume any form of local summability condition on the vertex weight of the  graph (let alone a condition of local finiteness). The reason is that we do not need  our energy functionals  to be densely defined. 
This shows again the flexibility of our approach. 

\medskip

A \textit{weighted graph} is a triple  $(X,b,m)$ satisfying the following
properties:
\begin{itemize}
 \item  $X$ is an arbitrary  set, whose elements are  referred to as
\textit{vertices};
 \item $b:X\times X\to [0,\infty)$  is a symmetric  function with $b(x,x)=0$
for all $x\in
 X$.
 \item  $m:X\to (0,\infty)$ is  a  function on the vertices.
\end{itemize}
An element $(x,y)\in X\times X$ with $b(x,y)>0$ is then  called an
\textit{edge}  and $b$ is denoted as \textit{edge weight}; The
positive function $m:X\to (0,\infty)$ gives a measure on $X$ of
which we think of as a volume. In particular, we define
$$\vol (\Omega):=\sum_{x\in \Omega} m(x)$$
for $\Omega \subset X$.

Any graph comes with both spectral and geometric data (and the interplay between these two is a topic of much interest).

Spectral data is given by  the \textit{energy} $\mathcal{E}$
associated to the graph  defined by the quadratic form
$$
\mathcal{E}(f):=
\frac12 \sum_{x,y\in
X}b(x,y)(f(x)-f(y))^2 \mbox{  for  } f: X\longrightarrow \RR, 
$$
which may assume the value $\infty$ for the time being. 
The underlying Hilbert
space is
$$
\ell^2(X,m):=\{ f\in\RR^X\mid  \sum_{x\in X} f(x)^2m(x)<\infty\}
$$
with inner product and norm given by
$$\langle f,g\rangle =\sum_{x\in X} f(x) g(x) m(x) \mbox{ and }
\|f\| = \sqrt{\langle f, f\rangle}$$ respectively.

Geometric data comes from associating a metric to the graph.  Various metrics can (more or less naturally) be attached to a graph. Here, we rely on a metric coming from paths as follows: 
A sequence of vertices  $\gamma=(x_0, ..., x_k)$ is called a
\textit{path}  from $x$ to $y$ if $x_0 = x$, $x_k = y$ and
$b(x_l,x_{l+1}) >0$ for $l=0,\ldots, k-1$. 
The \textit{length}  $L(\gamma)$  of a path $\gamma$ is given by
$$
L(\gamma):=\sum_{j=0,...,k-1}\frac{1}{b(x_j,x_{j+1})} .
$$A natural distance to consider is the function
$$d : X\times X\longrightarrow [0,\infty] \mbox{ with } 
d(x,y):=\inf\{L(\gamma)\mid \gamma\mbox{  a path from }x\mbox{  to
}y\},
$$
for $x,y\in X$ with $x\neq y$ and 
$d(x,x) =0$ for all $x\in X$. Here, the infimum over the empty set is defined to be $\infty$. 
Then, $d$ is a pseudometric in the wide sense, i.e. it  is symmetric
and satisfies the triangle inequality. Clearly, in this generality,
$d$ need not separate the points of $X$.  A sufficient condition for $d$ to separate the points is that the graph  \textit{locally finite} (i.e. to any $x\in X$ the cardinality of $\{y\in X : b(x,y)>0\}$ is finite).

We denote by
$$ U_r(x):= \{y\in X \mid d(x,y) < r\} \;\mbox{ and }\; B_r(x):= \{y\in
X \mid d(x,y) \le r\}$$ the  open and closed balls of radius $r$,
respectively.

A graph is  \textit{connected} if any two different vertices admit a path  from one to the other. 
Hence, the graph  is connected if and only if $d$ takes only finite values.  
A connected graph is  called  \textit{geodesic} if for any $x,y\in X$ there
exists a path $\gamma$ from $x$ to $y$ with $L(\gamma) = d(x,y)$.

\begin{definition} Let $(X,b,m)$ be a connected  graph. A subset $D$ of $X$ is called $(R,\varrho)$-thick if  it satisfies both 
\begin{itemize}

\item $\bigcup_{x\in D} B_R (x) =X$ and  

\item $\vol(\{x\})   \geq \varrho \vol(B_R (x))$ for all $x\in D$. 

\end{itemize}

\end{definition}

\begin{theorem}[Uncertainty for graphs] \label{theorem-graph} Let $(X,b,m)$ be a  connected geodesic graph all of whose balls are finite sets. Let $\mathcal{E}$ be the energy associated to  $(X,b,m)$.  
Let  $D\subset X$ be  $(R,\varrho)$-thick and set 
$\vol(R):=\sup_{x\in D} \vol(B_R (x))$. 
Then, for any $f\in L^2(X,\m)$, 
\begin{equation}\tag{graph-UP}\label{GUP}
\mathcal{E} (f) \leq \frac{\varrho}{4\cdot  R \cdot \vol(R)} \|f\|^2 \Longrightarrow 
\|f\|^2 \leq \frac{10}{\varrho} \|f 1_D\|^2.
\end{equation}

\end{theorem}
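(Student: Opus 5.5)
The plan is to run the local-to-global strategy of Theorem \ref{theorem:main-technical} with a \emph{partition} instead of a cover, so that $M=1$. For the local estimate on each piece I would use a pointwise Poincar\'e-type bound relative to the value at the centre, rather than relative to the average. This bypasses both Naißigkait's Lemma and the finite-measure Proposition, and should give the constants in \eqref{GUP}.

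\textbf{Step 1 (Voronoi cells).} For each $y\in X$ choose some $x\in D$ with $d(x,y)=\min_{z\in D}d(z,y)$, breaking ties consistently, and put $y$ into the cell $V_x$. The minimum is at most $R$ since the balls $B_R(x)$, $x\in D$, cover $X$. It is attained because all balls are finite sets, so only finitely many $z\in D$ satisfy $d(z,y)\le R$.

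This yields a partition $X=\dot\bigcup_{x\in D}V_x$ with $x\in V_x\subset B_R(x)$. As in \cite{LSS-17}, the tie-breaking should be chosen so that each cell is star-shaped with respect to geodesics. Concretely, for $y\in V_x$ some geodesic path from $x$ to $y$ (which exists, the graph being geodesic) runs entirely inside $V_x$. The underlying reason is that a vertex $w$ on a geodesic from $x$ to $y$ has $d(x,w)=d(x,y)-d(w,y)\le d(z,y)-d(w,y)\le d(z,w)$ for every $z\in D$. I expect this consistency of tie-breaking to be the only genuinely delicate point; the natural first attempt is to fix a well-ordering of $D$ and always choose the smallest minimizer.

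\textbf{Step 2 (local estimate).} Fix $y\in V_x$ and a geodesic $\gamma=(x_0,\dots,x_k)$ from $x$ to $y$ inside $V_x$. By Cauchy--Schwarz,
$$(f(y)-f(x))^2\le L(\gamma)\sum_j b(x_j,x_{j+1})(f(x_j)-f(x_{j+1}))^2\le R\,\mathcal{E}_{V_x}(f),$$
where $\mathcal{E}_{V}(f)=\frac12\sum_{u,v\in V}b(u,v)(f(u)-f(v))^2$. Here each unordered edge is counted once, and the edges along a geodesic are distinct, so no factor $2$ appears. Multiplying by $m(y)$ and summing over $y\in V_x$ gives
$$\|(f-f(x))1_{V_x}\|^2\le R\,\vol(V_x)\,\mathcal{E}_{V_x}(f)\le R\,\vol(R)\,\mathcal{E}_{V_x}(f).$$

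\textbf{Step 3 (globalize).} Since the cells are disjoint, $\sum_x\mathcal{E}_{V_x}(f)\le\mathcal{E}(f)$. Set $g:=\sum_{x\in D}f(x)1_{V_x}$. Summing the local estimates, the triangle inequality gives
$$\|f\|\le\|g\|+\|f-g\|\le\|g\|+\sqrt{R\,\vol(R)\,\mathcal{E}(f)}.$$
Under the hypothesis of \eqref{GUP}, the last term is at most $\frac{\sqrt\varrho}{2}\|f\|\le\frac12\|f\|$, using $\varrho\le1$. This bound on $\varrho$ holds automatically, since $m(x)\le\vol(B_R(x))$.

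Thickness gives $m(x)\ge\varrho\,\vol(B_R(x))\ge\varrho\,\vol(V_x)$, so
$$\|g\|^2=\sum_{x\in D}f(x)^2\vol(V_x)\le\frac1\varrho\sum_{x\in D}f(x)^2m(x)=\frac1\varrho\|f1_D\|^2.$$
Hence $\frac12\|f\|\le\varrho^{-1/2}\|f1_D\|$, i.e. $\|f\|^2\le\frac4\varrho\|f1_D\|^2\le\frac{10}\varrho\|f1_D\|^2$.

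Finally, if $\mathcal{E}(f)=\infty$ there is nothing to prove, and no density or summability of $b$ is used anywhere.
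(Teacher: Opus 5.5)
Your proof is correct, but it takes a genuinely different route from the paper.

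The paper uses the same Voronoi cells, quoting \cite{LSS-17} for (V1)--(V3); this is exactly the point you flag. Your well-ordering tie-break does work. You already showed that $x$ is a nearest centre to any vertex $w$ on a geodesic from $x$ to $y$. If some $z<x$ in $D$ were also nearest to $w$, then $d(z,y)\le d(z,w)+d(w,y)=d(x,y)$ would contradict the choice of $x$ for $y$.

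From there the paper bounds $\lambda_1^{V_p}\ge\frac{4}{\diam(V_p)\vol(V_p)}\ge\frac{2}{R\,\vol(R)}$ via the universal Poincar\'e lemma. It notes that the cells form a $(\frac{2}{R\vol(R)},1)$-decomposition for which $D$ is $\varrho$-thick, since $p\in D\cap V_p$ and $m(p)\ge\varrho\vol(V_p)$. It then invokes Theorem \ref{theorem:main-technical}, so the threshold $\frac{\lambda\varrho}{8}$ and the constant $\frac{10}{\varrho}$ come from Naißigkait's lemma and the finite-measure proposition.

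You instead compare $f$ with its value at the centre rather than with its cell average. This works precisely because the thickness of $D$ sits in the single point $p\in V_p$, and it removes any error term on the $D$-side. As a result your proof gives more than is claimed. It works verbatim for every $E<\frac{1}{R\vol(R)}$ and yields $\|f\|^2\le\varrho^{-1}\bigl(1-\sqrt{R\vol(R)E}\bigr)^{-2}\|f1_D\|^2$. That is a $\varrho$-independent energy window as in \cite{LSS-17}, but without any boundedness of the Laplacian, and it gives the constant $\frac{4}{\varrho}$ at the paper's threshold.

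What the paper's route buys is generality. It only needs an averaged Poincar\'e inequality on each piece. So the same scheme handles overlapping covers ($M>1$), thick sets whose mass is spread out within each cell, and non-quadratic energies on metric measure and Dirichlet spaces, where a pointwise path estimate like your Step 2 is not available.
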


The proof is based on our main abstract result and two auxiliary tools. These will be discussed next. The proof is then given at the end of the section.  We start with a discussion of the relevant  decompositions (see \cite{LSS-17} for further details and proofs).

\begin{definition}\label{def-vor}
Let $(X,b,m)$ be a graph and  $D\subset X$ non-empty. A
\emph{Voronoi decomposition} of $X$ with centers from $D$ is a
pairwise disjoint family $(V_p)_{p\in D}$ such that  following
conditions hold:
 \begin{enumerate}
  \item[(V1)] For each $p\in D$ the point $p$ belongs to  $ V_p$  and
  for all $x\in V_p$ there exists a path $\gamma$ from $p$ to $x$ that lies
   in $V_p$ and satisfies  $L(\gamma) = d(p,x)$.
  \item[(V2)] For each $p\in D$ and for all  $x\in V_p$ the inequality
  $d(p,x) \leq  d(q,x)$ holds  for any $q\in D$.
  \item[(V3)] $\bigcup_{p\in D}V_p=X$.
 \end{enumerate}
\end{definition}

Existence of Voronoi decompositions can be shown for geodesic graphs all of whose balls are finite sets. 

\begin{prop}[Existence of Voronoi decomposition, \cite{LSS-17}]\label{prop-vor}
Let $(X,b,m)$ be a connected geodesic  graph all of whose balls are finite sets.  Assume that  $D\subset X$ is
non-empty.  

(a) Then there exists a Voronoi decomposition with centers
from $D$. 

(b) Whenever $0<R <\infty$ satisfies $\bigcup_{x \in D} B_R (x) =X$ then any 
Voronoi decomposition $(V_p)_{p\in D}$  of $X$ with centers from $D$
has the property that $V_p\subset B_R(p)$ for all  $p\in D$.
\end{prop}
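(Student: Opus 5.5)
Part (b) follows directly from (V2). For part (a) I will construct the decomposition explicitly: assign each vertex to its nearest center, breaking ties with a well-order on $D$. The main work is checking (V1).

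\textbf{Preliminary facts.} Geodesicity gives separation of points. If $d(x,y)=0$ with $x\neq y$, some path would have length $0$. But every path between distinct vertices has positive length. Hence $d(x,y)=0$ forces $x=y$.

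Next, for each $x\in X$ the infimum $d(x,D):=\inf_{q\in D} d(q,x)$ is attained. The graph is connected, so $d(x,D)<\infty$. Pick $r>d(x,D)$. The ball $B_r(x)$ contains at least one point of $D$, and it is a finite set. So the infimum is a minimum over finitely many candidates.

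Let $N(x):=\{q\in D : d(q,x)=d(x,D)\}$ be the nonempty set of nearest centers of $x$.

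\textbf{Construction.} Fix a well-order $\prec$ on $D$ (by the well-ordering theorem). Assign each $x$ to $\pi(x):=\min_\prec N(x)$, and set $V_p:=\pi^{-1}(p)$.

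These sets are pairwise disjoint and cover $X$, which is (V3). Property (V2) holds by definition of $N(x)$. For $p\in D$ we have $d(p,p)=0$. By separation, no other $q\in D$ has $d(q,p)=0$, so $N(p)=\{p\}$ and $p\in V_p$.

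\textbf{Property (V1).} This is the main step. Let $x\in V_p$ and take a geodesic path $p=x_0,\dots,x_k=x$ with $L=d(p,x)$. Subpaths of a geodesic are geodesic, so
$$d(p,x_j)=d(p,x)-d(x_j,x)\quad\text{for each } j.$$
For any $q\in D$, the triangle inequality and $p\in N(x)$ give
$$d(q,x_j)\ \ge\ d(q,x)-d(x_j,x)\ \ge\ d(p,x)-d(x_j,x)\ =\ d(p,x_j).$$
Hence $p\in N(x_j)$.

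Suppose some $q\prec p$ also lies in $N(x_j)$. Then
$$d(q,x)\le d(q,x_j)+d(x_j,x)=d(p,x_j)+d(x_j,x)=d(p,x),$$
so $q\in N(x)$. This contradicts $\pi(x)=p$. Therefore $\pi(x_j)=p$ for every $j$, so the whole geodesic lies in $V_p$, which proves (V1).

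\textbf{Part (b).} Let $x\in V_p$. By the covering assumption there is $q\in D$ with $d(q,x)\le R$. Then (V2) gives $d(p,x)\le d(q,x)\le R$, so $x\in B_R(p)$.
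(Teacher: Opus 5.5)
Your proof is correct and complete. Nearest-center assignment with a fixed tie-breaking order, plus the observation that a tie at a vertex of a geodesic from $p$ to $x$ propagates to $x$, gives (V1); (b) is immediate from (V2). The paper gives no proof of its own here and defers to \cite{LSS-17}, so your argument serves as a self-contained proof of the natural construction.

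One simplification: you do not need the well-ordering theorem. As the paper notes right after the proposition, $X$ (and hence $D$) is countable, so an enumeration suffices. In fact any total order on $D$ works, because each $N(x)\subseteq D\cap B_r(x)$ is finite.
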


We note that the finiteness conditions on balls from the previous proposition implies that the set $X$ is countable.

We now turn to the relevant Poincar\'{e} inequality. Let $F$ be  a non-empty subset of $X$ with 
$$\vol (F)<\infty$$
  and denote by $m_F$ the restriction of $m$ to $F$.   Then, $(F,m_F)$ is a finite  measure space. Denote the inner product on 
 $\ell^2 (F,m_F)$ by $\langle \cdot,\cdot\rangle_F$ and the corresponding norm by $\|\cdot\|_F$. 
The restriction  $\en_F$ of $\en$  to  $F$ is the energy functional on $\ell^2 (F,m_F)$ given  by
$$\en_F (f) :=\frac{1}{2} \sum_{x,y\in F} b(x,y) (f(x) - f(y))^2.$$
We define 
$$\lambda_1^F :=\inf\left\{\frac{\en_F (f)}{\|f\|} :0\neq  f\in \ell^2 (F,m) \mbox{ with }  \langle f, \underline{1}\rangle_F = 0\right\}.$$
Then, a simple argument gives 
$$ \lambda_1^F \|f - A(f) \underline{1}\|_F \leq \en_F (f)$$
  for all $f\in \ell^2 (F,m_F)$ (see remark in Section \ref{section:technical-core} as well). 

(We note in passing that this includes the case that $F$ consists of only one point. In this case $\|f- A(f) \underline{1}\|_F =0$, $\en_F =0$ and $\lambda_1^F = \infty$ as it is the infimum over an empty set  and we use the convention that $\infty \cdot 0 = 0$.)

We are going to provide a lower bound on $\lambda_1^F$ in geometric terms. The \textit{intrinsic diameter} of a subset $\varOmega$ of  $X$ is given by
$$\diam (\varOmega) :=\sup_{x,y\in \varOmega}\inf\{ L(\gamma) : \mbox{ $\gamma$ path from $x$ to $y$ in $\varOmega$}\}.$$
We note that the intrinsic diameter is defined with respect to  (paths in)  $\varOmega$ only. So,  $\diam (\varOmega)$ may be strictly bigger that $\sup_{x,y\in\varOmega} d(x,y)$. In particular, $\diam (\varOmega) = \infty$ if there are $x,y\in \varOmega$ that are not joined by a path in $\varOmega$. 

The following lemma is well known, see e.g.  the survey  \cite{LS-20},  to which we refer for further references as well.

\begin{lemma}[Universal Poincar\'{e}] Let $(X,b,m)$ as above and let $F$ be a subset of $X$ with $\vol (F)<\infty$. Then, 
$$
\lambda_1^F\ge\frac{4}{\diam(F)\vol(F)}
$$
holds. Here, $\diam (F) =\infty$ is possible in which case the right hand side of the inequality is zero. 
\end{lemma}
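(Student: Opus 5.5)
The plan is to prove the bound in the squared form
$$\|f\|_F^2\le \frac{\diam(F)\vol(F)}{4}\,\en_F(f)\quad\mbox{for all } f\in\ell^2(F,m_F) \mbox{ with } \langle f,\underline{1}\rangle_F=0,$$
which is the form needed for the Poincar\'{e} inequality \eqref{PI}. I read the $\|f\|$ in the denominator of $\lambda_1^F$ as $\|f\|^2$, consistent with the remark in Section~\ref{section:technical-core}.

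If $\diam(F)=\infty$ the right-hand side of the claim is $0$, so there is nothing to show. If $F$ is a single point the infimum is over the empty set and $\lambda_1^F=\infty$. So assume $\diam(F)<\infty$. In particular, any two points of $F$ are joined by a path inside $F$. We may also assume $\en_F(f)<\infty$.

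\textbf{Step 1: an oscillation bound from paths.} Let $x,y\in F$ and $\varepsilon>0$. Choose a path $\gamma=(x_0,\dots,x_k)$ in $F$ from $x$ to $y$ with $L(\gamma)\le\diam(F)+\varepsilon$. We may take $\gamma$ simple, since deleting loops only shortens it, so each unordered edge is used at most once. By Cauchy--Schwarz with weights $1/b(x_j,x_{j+1})$ and $b(x_j,x_{j+1})$,
$$|f(x)-f(y)|^2\le\Big(\sum_j |f(x_j)-f(x_{j+1})|\Big)^2\le L(\gamma)\sum_j b(x_j,x_{j+1})(f(x_j)-f(x_{j+1}))^2 .$$
The last sum is over distinct unordered edges of $F$, so it is at most $\en_F(f)$, because the factor $\frac12$ in $\en_F$ exactly compensates for counting ordered pairs. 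Letting $\varepsilon\to0$ gives
$$\big(\sup_F f-\inf_F f\big)^2\le \diam(F)\,\en_F(f).$$
In particular $f$ is bounded on $F$, so $M:=\sup_F f$ and $\mu:=\inf_F f$ are finite, even when $F$ is infinite.

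\textbf{Step 2: variance versus oscillation.} Since $\langle f,\underline{1}\rangle_F=0$, the constant $0=A(f)$ minimizes $c\mapsto\|f-c\underline{1}\|_F^2$. Take $c=(M+\mu)/2$. Every value of $f$ lies within $(M-\mu)/2$ of $c$, so
$$\|f\|_F^2\le\|f-c\underline{1}\|_F^2\le\vol(F)\,\frac{(M-\mu)^2}{4}\le\frac{\vol(F)\diam(F)}{4}\,\en_F(f).$$
This is exactly the asserted bound $\lambda_1^F\ge 4/(\diam(F)\vol(F))$.

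The only delicate point is getting the constant $4$ rather than $2$. The naive route writes $\|f\|^2=\frac{1}{2\vol(F)}\sum_{x,y}(f(x)-f(y))^2m(x)m(y)$ and bounds each term by Step 1, but that only yields the constant $2$. The fix is the midpoint choice of $c$ in Step 2, together with the edge-counting remark in Step 1 (simple paths). The remaining care is ensuring boundedness of $f$ when $F$ is infinite, which Step 1 itself provides.
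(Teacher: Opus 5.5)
Your proof is correct and complete. The paper does not prove this lemma; it only cites it as well known, referring to \cite{LS-20}, so there is no in-paper argument to compare with.

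Your argument is a self-contained proof of the weighted analogue of Mohar's bound $\lambda_2\ge 4/(n\,\diam)$. Cauchy--Schwarz along a simple path in $F$ bounds the oscillation of $f$ by $\sqrt{\diam(F)\,\en_F(f)}$. The midpoint choice $c=(M+\mu)/2$, a Popoviciu-type variance bound, is exactly what gives the constant $4$ instead of the $2$ from the naive double-sum estimate.

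Three remarks that sharpen your write-up:
(i) Reading $\|f\|$ as $\|f\|^2$ in the definition of $\lambda_1^F$ is necessary, not cosmetic. Taken literally, $\en_F(tf)/\|tf\|=t\,\en_F(f)/\|f\|\to 0$ as $t\to 0$, so the stated bound would fail whenever $F$ has at least two points and finite diameter.
(ii) Your multiplicative formulation also covers the degenerate case $\diam(F)=0$ with $F$ infinite. This can happen when the edge weights at every vertex of $F$ are unbounded. Step~1 then forces $\en_F(f)=\infty$ for every nonzero mean-zero $f$, so $\lambda_1^F=\infty$, consistent with reading $4/0$ as $\infty$.
(iii) Since $A(f)$ minimizes $c\mapsto\|f-c\underline{1}\|_F^2$, Step~2 gives the Poincar\'{e} inequality $\frac{4}{\diam(F)\vol(F)}\|f-A(f)\underline{1}\|_F^2\le\en_F(f)$ for all $f$ at once. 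This is the form actually used in the proof of Theorem~\ref{theorem-graph}.
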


After these preparations we can now provide the proof of the main result in this section.

\begin{proof}[Proof of Theorem \ref{theorem-graph}]
By Proposition \ref{prop-vor} there exists a Voronoi decomposition $V_p$, $p\in D$. In particular, for any $p\in D$ the set   $V_p$ contains $p$  and  each element of $V_p$ allows for a path to $p$ completely contained in $V_p$ and  of length less than $R$.  In particular, $V_p$ is contained in $B_R (p)$ and has diameter less than $2 R$. By finiteness of balls this implies also that $V_p$ is finite and, hence, has finite volume. 
Given this,  the universal Poincar\'{e} inequality gives
$$
\lambda_1^{V_p} \ge\frac{4}{\diam(V_p)\vol(V_p)} \geq \frac{2}{ R \vol(R)}$$
for each $p\in D$. Moreover, for any $f$ on $X$ we clearly have 
$$ \sum_{p\in D} \mathcal{E}_{V_p} (f|_{V_p}) \leq \en (f).$$
Together with the fact that $D$ is $(R,\varrho)$-thick, this  implies that $(V_p)$, $p \in D$, provide a $(\varrho,1)$-decomposition. So, the statement follows from the main abstract result. 
\end{proof}

We now compare the results here with those in \cite{LSS-17} where a completely different method of proof was employed.
\begin{itemize}
 \item Our estimate \eqref{GUP} gives an uncertainty estimate for low energy functions, where the energy cut-off 
 $$
 E=\frac{\varrho}{4\cdot  R \cdot \vol(R)}
 $$
 depends on the density $\varrho$ of the $(R,\varrho)$-thick set $D$, see above.
 \item We do not require the graph Laplacian to be bounded. 
 \item In terms of uncertainty estimates, we get
 \begin{equation}
 \label{eq:3}
\| 1_D f\|^2 \geq \frac{\varrho}{10} \| f\|^2\mbox{   if  }f\in Ran(P_E) .
\end{equation}
 with $E$ as above. 
 \item With the method from \cite{LSS-17}, we can treat $D$ which are $R$-dense in the sense that every point in $X$ has distance at most $R$ from $D$; this property is clearly less restrictive than $(R,\varrho)$-thickness. We then get estimates like \eqref{eq:intro2} with a somewhat complicated $\kappa$ for any 
 $$E<\frac{1}{R\cdot \vol(R)}.$$
 So the energy range is somewhat better, but the constants somewhat worse and the estimate holds for bounded Laplacians only.
\end{itemize}
In \cite{BST} it was mentioned that the method can be extended to graphs. We do not go into details here, just roughly sketch the way of the argument. We start with a graph like in the above Theorem with the additional requirement of local boundedness, so that we have a densely defined graph Laplacian $H$ at our disposal. With a Voronoi decomposition as above we obtain, for a  $(R,\varrho)$-thick set $D$ that
\begin{equation}\label{LB}\tag{LB}
H+1_D\ge \frac{\varrho}{1+4\cdot R\cdot\vol(R)}.
\end{equation}

The argument runs along the lines of the proof of Corollary 4.2 from \cite{BST}, where the Voronoi decomposition replaces the decomposition into cubes and the universal Poincaré inequality gives the necessary uniform bound. As readily seen, this latter estimate \eqref{LB} gives an uncertainty estimate of the type \eqref{GUP}, more specifically, for any $E<\frac{\varrho}{1+4\cdot R\cdot\vol(R)}$,
\begin{equation}\label{bst}
 \mathcal{E} (f) \leq E \|f\|^2 \Longrightarrow 
\|f\|^2 \leq  \left(\frac{\varrho}{1+4\cdot R\cdot\vol(R)}-E\right)^{-1}\|f 1_D\|^2.
\end{equation}
We note that this uncertainty principle is slightly weaker than what we obtain with our new method here, and , as mentioned above, it needs more restrictive conditions.

In \cite{LSS-17} we discussed estimates from earlier papers, in particular estimates that use the isoperimetric or Cheeger constant, as does \cite{pesenson-2}. For the local estimate on finite parts, there is not too much difference, so we do not repeat this discussion here. On the other hand, for the infinite case, our method here is the first that applies to unbounded Laplacians. In fact, since we are not restricted to quadratic energy forms, much more general classes of energy functionals might be treated by our method.


\begin{thebibliography}{10}

\bibitem{al}
A.~Agaskar and Y.~M. Lu.
\newblock A spectral graph uncertainty principle.
\newblock {\em IEEE Transactions on Information Theory}, 59(7):4338--4356,
  2013.

\bibitem{ags-2005}
L.~Ambrosio, N.~Gigli, and G.~Savaré.
\newblock {\em Gradient flows: in metric spaces and in the space of probability
  measures}.
\newblock Springer Science \& Business Media, 2005.

\bibitem{ags-2014}
L.~Ambrosio, N.~Gigli, and G.~Savaré.
\newblock Calculus and heat flow in metric measure spaces and applications to
  spaces with ricci bounds from below.
\newblock {\em Inventiones Mathematicae}, 195(2):289--391, 2014.

\bibitem{ago}
A.~Anis, A.~Gadde, and A.~Ortega.
\newblock Towards a sampling theorem for signals on arbitrary graphs.
\newblock In {\em 2014 IEEE International Conference on Acoustics, Speech and
  Signal Processing (ICASSP)}. IEEE, 2014.

\bibitem{BeauchardP-18}
K.~Beauchard and K.~Pravda-Starov.
\newblock Null-controllability of hypoelliptic quadratic differential
  equations.
\newblock {\em J. \'Ec. polytech. Math.}, 5:1--43, 2018.

\bibitem{BST}
A.~B{\"o}ttcher, P.~Stollmann, and M.~Tautenhahn.
\newblock Uncertainty principles and lower bounds for {S}chr{\"o}dinger
  operators.
\newblock {\em Pure and Applied Functional Analysis, to appear}, 2026.
\newblock arXiv:2606.15424.

\bibitem{BLS}
A.~Boutet~de Monvel, D.~Lenz, and P.~Stollmann.
\newblock An uncertainty principle, {W}egner estimates and localization near
  fluctuation boundaries.
\newblock {\em Mathematische Zeitschrift}, 269(3):663--670, 2011.

\bibitem{buser}
P.~Buser.
\newblock A note on the isoperimetric constant.
\newblock {\em Ann. Sci. Ecole Norm. Sup.}, 15:213--230, 1982.

\bibitem{cheeger}
J.~Cheeger.
\newblock Differentiability of {L}ipschitz functions on metric measure spaces.
\newblock {\em Geometric \& Functional Analysis (GAFA)}, 9:428--517, 1999.

\bibitem{cvsk}
S.~Chen, R.~Varma, A.~Sandryhaila, and J.~Kova{\v c}evi{\'c}.
\newblock Discrete signal processing on graphs: Sampling theory.
\newblock {\em IEEE Transactions on Signal Processing}, 63(24):6510--6523,
  2015.

\bibitem{CHK-03}
J.-M. Combes, P.~D. Hislop, and F.~Klopp.
\newblock H{\"o}lder continuity of the integrated density of states for some
  random operators at all energies.
\newblock {\em International Mathematics Research Notices}, 2003(4):179--209,
  2003.

\bibitem{CHK-07}
J.-M. Combes, P.~D. Hislop, and F.~Klopp.
\newblock An optimal {W}egner estimate and its application to the global
  continuity of the integrated density of states for random {S}chr{\"o}dinger
  operators.
\newblock {\em Duke Mathematical Journal}, 140(3):469--498, 2007.

\bibitem{dello}
L.~Dello~Schiavo and K.~Suzuki.
\newblock Rademacher-type theorems and {S}obolev-to-{L}ipschitz properties for
  strongly local {D}irichlet spaces.
\newblock {\em Journal of Functional Analysis}, 281(11):109234, 2021.

\bibitem{Douglas-66}
R.~G. Douglas.
\newblock On majorization, factorization, and range inclusion of operators on
  {H}ilbert space.
\newblock {\em Proceedings of the American Mathematical Society},
  2(17):413--415, 1966.

\bibitem{EgidiGST-24}
M.~Egidi, D.~Gallaun, C.~Seifert, and M.~Tautenhahn.
\newblock Sufficient criteria for stabilization properties in {B}anach spaces.
\newblock {\em Integral Equations and Operator Theory}, 96(2), 2024.
\newblock Article number 13.

\bibitem{Fuk}
M.~Fukushima, Y.~{\=O}shima, and M.~Takeda.
\newblock {\em {D}irichlet forms and symmetric Markov processes}, volume~19 of
  {\em de Gruyter Studies in Mathematics}.
\newblock Walter de Gruyter, Berlin, 1994.

\bibitem{GallaunST-20}
D.~Gallaun, C.~Seifert, and M.~Tautenhahn.
\newblock Sufficient criteria and sharp geometric conditions for observability
  in {B}anach spaces.
\newblock {\em SIAM Journal on Control and Optimization}, 58(4):2639--2657,
  2020.

\bibitem{heinonen}
Juha Heinonen.
\newblock {\em Lectures on Analysis on Metric Spaces}.
\newblock Springer Science \& Business Media, 2001.

\bibitem{hu-ma-sun}
Z.~C. Hu, Z.~M. Ma, and W.~Sun.
\newblock Extensions of {L}{\'e}vy--{K}hintchine formula and {B}eurling--{D}eny
  formula in semi-{D}irichlet forms setting.
\newblock {\em Journal of Functional Analysis}, 239(1):179--213, 2006.

\bibitem{JerisonL-99}
D.~Jerison and G.~Lebeau.
\newblock Nodal sets of sums of eigenfunctions.
\newblock In M.~Christ, C.~E. Kenig, and C.~Sadosky, editors, {\em Harmonic
  Analysis and Partial Differential Equations}, pages 223--239. The University
  of Chicago Press, Chicago, 1999.

\bibitem{kato}
Tosio Kato.
\newblock {\em Perturbation Theory for Linear Operators}, volume 132.
\newblock Springer Science \& Business Media, 2013.

\bibitem{K-13}
A.~Klein.
\newblock Unique continuation principle for spectral projections of
  {S}chr{\"o}dinger operators and optimal {W}egner estimates for non-ergodic
  random {S}chr{\"o}dinger operators.
\newblock {\em Communications in Mathematical Physics}, 323(3):1229--1246,
  2013.

\bibitem{kuwae-98}
K.~Kuwae.
\newblock Functional calculus for {D}irichlet forms.
\newblock {\em Osaka Journal of Mathematics}, 35(3):683--715, 1998.

\bibitem{kuwae}
K.~Kuwae.
\newblock Reflected {D}irichlet forms and the uniqueness of {S}ilverstein's
  extension.
\newblock {\em Potential Analysis}, 16:221--247, 2002.

\bibitem{LebeauR-95}
G.~Lebeau and L.~Robbiano.
\newblock Contr{\^o}le exact de l'{\'e}quation de la chaleur.
\newblock {\em Communications in Partial Differential Equations},
  20(1--2):335--356, 1995.

\bibitem{LebeauZ-98}
G.~Lebeau and E.~Zuazua.
\newblock Null-controllability of a system of linear thermoelasticity.
\newblock {\em Archive for Rational Mechanics and Analysis}, 141(4):297--329,
  1998.

\bibitem{LSS-18}
D.~Lenz, M.~Schmidt, and P.~Stollmann.
\newblock Topological {P}oincar{\'e} type inequalities and lower bounds on the
  infimum of the spectrum for graphs, 2018.
\newblock Preprint.

\bibitem{LS-20}
D.~Lenz and P.~Stollmann.
\newblock Universal lower bounds for {L}aplacians on weighted graphs.
\newblock In {\em Analysis and Geometry on Graphs and Manifolds}, volume 461 of
  {\em London Mathematical Society Lecture Note Series}, pages 156--171.
  Cambridge University Press, Cambridge, 2020.

\bibitem{LSS-17}
D.~Lenz, P.~Stollmann, and G.~Stolz.
\newblock An uncertainty principle and lower bounds for the {D}irichlet
  {L}aplacian on graphs.
\newblock {\em Journal of Spectral Theory}, 10(1):115--145, 2020.

\bibitem{maroeckner}
Zhi-Ming Ma and Michael R{\"o}ckner.
\newblock {\em Introduction to the Theory of (Non-Symmetric) {D}irichlet
  Forms}.
\newblock Springer Science \& Business Media, 2012.

\bibitem{Miller-10}
L.~Miller.
\newblock A direct {L}ebeau--{R}obbiano strategy for the observability of
  heat-like semigroups.
\newblock {\em Discrete and Continuous Dynamical Systems. Series B},
  14(4):1465--1485, 2010.

\bibitem{MRM-22}
Peter M{\"u}ller and Constanza Rojas-Molina.
\newblock Localisation for {D}elone operators via {B}ernoulli randomisation.
\newblock {\em Journal d'Analyse Math{\'e}matique}, 147(1):297--331, 2022.

\bibitem{MuenchSST}
F.~M{\"u}nch, C.~Seifert, P.~Stollmann, and M.~Tautenhahn.
\newblock On controllability, observability and stabilizability of the heat
  equation on discrete graphs.
\newblock {\em arXiv}, 2026.

\bibitem{NTTV-18}
I.~Naki{\'c}, M.~T{\"a}ufer, M.~Tautenhahn, and I.~Veseli{\'c}.
\newblock Scale-free unique continuation principle for spectral projectors,
  eigenvalue-lifting and {W}egner estimates for random {S}chr{\"o}dinger
  operators.
\newblock {\em Analysis and PDE}, 11(4):1049--1081, 2018.

\bibitem{NakicTTV-20}
I.~Naki{\'c}, M.~T{\"a}ufer, M.~Tautenhahn, and I.~Veseli{\'c}.
\newblock Sharp estimates and homogenization of the control cost of the heat
  equation on large domains.
\newblock {\em ESAIM: Control, Optimisation and Calculus of Variations},
  26(54):26, 2020.

\bibitem{pesenson-2}
I.~Z. Pesenson.
\newblock Sampling in {P}aley-{W}iener spaces on combinatorial graphs.
\newblock {\em Transactions of the American Mathematical Society},
  360:5603--5627, 2008.

\bibitem{pesenson}
I.~Z. Pesenson.
\newblock Sampling by averages and average splines on {D}irichlet spaces and on
  combinatorial graphs.
\newblock In M.~Hirn, S.~Li, K.~A. Okoudjou, S.~Saliani, and {\"O}.~Yilmaz,
  editors, {\em Excursions in Harmonic Analysis, Volume 6}, Applied and
  Numerical Harmonic Analysis. Birkh{\"a}user, Cham, 2021.

\bibitem{ptgv}
G.~Puy, N.~Tremblay, R.~Gribonval, and P.~Vandergheynst.
\newblock Random sampling of bandlimited signals on graphs.
\newblock {\em Applied and Computational Harmonic Analysis}, 44(2):446--475,
  2018.

\bibitem{reed-simon}
Michael Reed and Barry Simon.
\newblock {\em Methods of Modern Mathematical Physics: Functional Analysis},
  volume~1.
\newblock Gulf Professional Publishing, 1980.

\bibitem{RMV-13}
C.~Rojas-Molina and I.~Veseli{\'c}.
\newblock Scale-free unique continuation estimates and applications to random
  {S}chr{\"o}dinger operators.
\newblock {\em Communications in Mathematical Physics}, 320(1):245--274, 2013.

\bibitem{saloff}
Laurent Saloff-Coste.
\newblock {\em Aspects of {S}obolev-type Inequalities}, volume 289.
\newblock Cambridge University Press, 2002.

\bibitem{snfov}
D.~I. Shuman, S.~K. Narang, P.~Frossard, A.~Ortega, and P.~Vandergheynst.
\newblock The emerging field of signal processing on graphs: Extending
  high-dimensional data analysis to networks and other irregular domains.
\newblock {\em IEEE Signal Processing Magazine}, 30(3):83--98, May 2013.

\bibitem{PMFO}
P.~Stollmann.
\newblock A new uncertainty principle at low energies.
\newblock In David Damanik, Matthias Keller, Tatiana Nagnibeda, and Felix
  Pogorzelski, editors, {\em Geometry, Dynamics and Spectrum of Operators on
  Discrete Spaces}, volume~18 of {\em Oberwolfach Reports}, pages 33--85. 2021.

\bibitem{stolzman}
P.~Stollmann and Gunter Stolz.
\newblock Lower bounds for {D}irichlet {L}aplacians and uncertainty principles.
\newblock {\em Journal of the European Mathematical Society (JEMS)},
  23(7):2337--2360, 2021.

\bibitem{TenenbaumT-11}
G.~Tenenbaum and M.~Tucsnak.
\newblock On the null-controllability of diffusion equations.
\newblock {\em ESAIM: Control, Optimisation and Calculus of Variations},
  17(4):1088--1100, 2011.

\bibitem{tbl}
M.~Tsitsvero, S.~Barbarossa, and P.~Di~Lorenzo.
\newblock Signals on graphs: Uncertainty principle and sampling.
\newblock {\em IEEE Transactions on Signal Processing}, 64(18):4845--4860,
  2016.

\bibitem{WangZ-17}
G.~Wang and C.~Zhang.
\newblock Observability inequalities from measurable sets for some abstract
  evolution equations.
\newblock {\em SIAM Journal on Control and Optimization}, 55(3):1862--1886,
  2017.

\bibitem{wei}
G.~Wei.
\newblock Volume comparison and its generalizations.
\newblock {\em Advanced Lectures in Mathematics}, 22:311--322, 2012.

\end{thebibliography}
\end{document}